\newtheorem{Thm}{Theorem}[section]
\newtheorem{Lem}[Thm]{Lemma}
\newtheorem{Cor}[Thm]{Corollary}
\newtheorem{Prop}[Thm]{Proposition}
\theoremstyle{definition}
\newtheorem{Rem}[Thm]{Remark}
\newtheorem{Def}[Thm]{Definition}
\newtheorem{Expl}[Thm]{Example}
\newcommand{\CC}{\mathbbm{C}} %chg complex numbers
\newcommand{\NN}{\mathbbm{N}} %chg natural numbers
\newcommand{\ZZ}{\mathbbm{Z}} %chg integers
\newcommand{\ZZp}{\ZZ_{\geq 1}}
\newcommand{\df}{\colon}
\newcommand{\id}{\operatorname{id}}
\newcommand{\cF}{{\mathcal F}}
\newcommand{\cM}{{\mathcal M}}
\newcommand{\cO}{{\mathcal O}}
\newcommand{\cP}{{\mathcal P}}
\newcommand{\fg}{\mathfrak{g}}
\newcommand{\fn}{\mathfrak{n}}
\newcommand{\bb}{\mathbf{b}}
\newcommand{\bd}{{\mathbf d}}
\newcommand{\be}{{\mathbf e}}
\newcommand{\bm}{{\mathbf m}}
\newcommand{\bp}{{\mathbf p}}
\newcommand{\bq}{{\mathbf q}}
\newcommand{\br}{{\mathbf r}} %chg
\newcommand{\bs}{{\mathbf s}}
\newcommand{\bv}{{\mathbf v}}
\newcommand{\bw}{{\mathbf w}}
\newcommand{\bx}{{\mathbf x}}
\newcommand{\sfA}{{\mathsf{A}}}
\newcommand{\tsfA}{{\widetilde{\mathsf{A}}}}
\newcommand{\sfB}{{\mathsf{B}}}
\newcommand{\sfC}{{\mathsf{C}}}
\newcommand{\tsfC}{{\widetilde{\mathsf{C}}}}
\newcommand{\alp}{\alpha}   
\newcommand{\bet}{\beta}
\newcommand{\del}{\delta}
\newcommand{\zet}{\zeta}
\newcommand{\tht}{\theta}
\newcommand{\lam}{\lambda}
\newcommand{\ome}{\omega}
\newcommand{\vep}{\varepsilon}
\newcommand{\Del}{\Delta}
\newcommand{\Ome}{\Omega}
\newcommand{\hgt}{\operatorname{ht}}
\newcommand{\rk}{\operatorname{rank}}
\newcommand{\lmd}{\operatorname{\!-mod}}
\newcommand{\rep}{\operatorname{rep}}
\newcommand{\replf}{\operatorname{rep}_{\mathrm{l.f.}}}
\newcommand{\Mat}{\operatorname{Mat}}
\newcommand{\dimv}{\underline{\dim}}
\newcommand{\rkv}{\underline{\rk}}
\newcommand{\rks}{\underline{\operatorname{rk}}}
\newcommand{\supp}{\operatorname{supp}}
\newcommand{\Hom}{\operatorname{Hom}}
\newcommand{\Mor}{\operatorname{Mor}}
\newcommand{\Ext}{\operatorname{Ext}}
\newcommand{\St}{\operatorname{St}}
\newcommand{\Ba}{\operatorname{Ba}}
\newcommand{\pBa}{\operatorname{pBa}}
\newcommand{\Stlf}{\St_{\mathrm{lf}}}
\newcommand{\bil}[1]{\langle #1\rangle}
\newcommand{\abs}[1]{\left| #1\right|}
\newcommand{\bsm}{\begin{smallmatrix}}
\newcommand{\esm}{\end{smallmatrix}}
\newcommand{\bbsm}{\left[\begin{smallmatrix}}
\newcommand{\besm}{\end{smallmatrix}\right]}
\newcommand{\bbm}{\begin{matrix}}
\newcommand{\ebm}{\end{matrix}}
\newcommand{\diag}{\operatorname{diag}}
\newcommand{\GL}{\operatorname{GL}}
\newcommand{\Gr}{\operatorname{Gr}}
\newcommand{\ra}{\rightarrow}
\newcommand{\real}{{\rm re}}
\newcommand{\imag}{{\rm im}}
\newcommand{\II}{\mathbbm{1}}
\begin{document}
%%%%%%%%%%%%%%%%
%%%%%%%%%%%%%%%%

%\today
\date{August 10, 2025}

%%%%%%%%%%%%%%%%%%%%%%%%%%%%%%%%%%%%%%%%%%%%%%%
\title[Geometric construction of $U(\fn)$ for affine type $\tsfC_n$]{A geometric construction of $U(\fn)$ for affine Kac-Moody algebras of
  type $\tsfC_{n}$}
%%%%%%%%%%%%%%%%%%%%%%%%%%%%%%%%%%%%%%%%%%%%%%%

\author{Alberto Castillo Gómez}
\author{Christof Geiss}
\address{Instituto de Matemáticas, UNAM,
  Ciudad Universitaria, 04510 CDMX, México}
\email{betho1027488@gmail.com}
\email{christof.geiss@im.unam.mx}
%%%%%%%%%%%%%%%%%%%%%%%%%%%%%%%%%%%%%%%%%%%%%%%%%%%%%%
%%%%%%%%%%%%%%%%%%%%%%%%%%%%%%%%%%%%%%%%%%%%%%%%%%%%%%

\begin{abstract}
  Inspired by the work of Geiss, Leclerc and Schröer~\cite{GLS3} we realize the enveloping algebra of the positive part of an affine Kac-Moody Lie algebra of Dynkin type $\tsfC_n$ as 
  a generalized composition algebra of constructible
  functions on the varieties of locally free representations of the corresponding 1-Iwanaga-Gorenstein algebra $H=H_\CC(C,D,\Ome)$  with minimal symmetrizer $D$.
  To this end, we exploit in several ways the fact that in this situation $H$ is a  string algebra.  
%  gentle and the same time we use generalized reflection functors.  
\end{abstract}

%%%%%%%%%%%%%%%%
\maketitle
%%%%%%%%%%%%%%%%

\setcounter{tocdepth}{1}
\numberwithin{equation}{section}
\tableofcontents

\parskip2mm

\section{Introduction}
\subsection{Context and main result}
Let $C\in\Mat_{I\times I}(\ZZ)$ be a generalized symmetrizable Cartan matrix with (left) symmetrizer $D$ and an acyclic orientation $\Ome$ of the diagram attached
to $C$. Consider the corresponding 1-Iwanaga-Gorenstein algebra
$H=H_\CC(C,D,\Ome)$ over the complex numbers from~\cite{GLS1},
and $\cM(H)=\oplus_{\br\in\NN^I}\cM_\br(H)$ the generalized composition algebra of constructible functions on the  varieties of locally free representations $\replf(H,\br)_{\br\in\NN^I}$ of $H$ from~\cite{GLS3}.
By definition,  $\cM(H)$ is generated by the characteristic functions 
$\theta_i$ of the generalized simple modules $E_i$ for $i\in I$. In fact, 
$\cM(H)$ is a Hopf algebra with $\theta_i\in\cM_{\alp_i}(H)$, 
where $\alp_i$ is the simple root corresponding to 
$i\in I$.  Since the generators
$\theta_i$ fulfill the (generalized) Serre relations,
we have a surjective algebra homomorphism
$\pi_H\df U(\fn(C))\ra\cM(H)$, where $U(\fn(C))$ is the universal enveloping algebra of the positive part $\fn(C)$ of the Kac-Moody
Lie algebra $\fg(C)$ attached to $C$, see~\cite[Thm.~1.1a]{GLS3}.  
It is well-known by the work of Schofield~\cite{Scf} and Lusztig~\cite{Lu}, that $\pi_H$ is an
isomorphism for $C$ symmetric and $D$ the trivial symmetrizer, since in this case $H$ is just the path algebra of a quiver.  The main result
of~\cite[Thm.1.1b]{GLS3} states that $\pi_H$ is also an isomorphism for each (possibly non-symmetric) Cartan matrix of finite type and any choice
of symmetrizer. The authors conjecture that $\pi_H$ is always an isomorphism.
Our main result adds evidence for this conjecture:

\begin{Thm}\label{thm:princ}
  Let $C$ be a generalized Cartan matrix of affine type $\tsfC_n$,
  and $D$ the minimal symmetrizer for $C$, then $\pi_H\df U(\fn(C))\ra\cM(H)$
  is an isomorphism for each orientation $\Ome$.  
\end{Thm}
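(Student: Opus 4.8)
The plan is to follow the strategy of \cite{GLS3} for the finite-type case, adapting it to the affine setting by exploiting two structural features specific to affine type $\tsfC_n$ with minimal symmetrizer: the gentleness of $H=H_\CC(C,D,\Ome)$ and the availability of generalized reflection functors.

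\textbf{Step 1: Reduce to a statement about dimensions of graded pieces.} Since $\pi_H$ is a surjective homomorphism of $\NN^I$-graded algebras, it suffices to show that for every $\br\in\NN^I$ we have $\dim_\CC\cM_\br(H)\geq\dim_\CC U(\fn(C))_\br$, i.e.\ that $\cM(H)$ is "large enough". By the Poincaré--Birkhoff--Witt theorem for $U(\fn(C))$, the right-hand side is given in terms of Kostant's partition function, equivalently the product formula with root multiplicities $\dim\fg_\alpha$ for $\alpha\in\Delta^+$. One real root and one (one-dimensional) imaginary root multiplicity appear per level in affine type, so the target dimensions are explicitly known. The content is thus a lower bound on the number of suitable orbits / the dimension of the function space on $\replf(H,\br)$.

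\textbf{Step 2: Use gentleness to control the locally free representations.} Because $H$ is gentle in this case, the indecomposable locally free $H$-modules and their $\Ext^1$-structure admit a combinatorial description via strings and bands, and one can classify the rigid locally free modules and the generic decomposition of $\replf(H,\br)$. Following the philosophy of \cite{GLS3}, I would (a) identify the "locally free rigid" indecomposables with the positive real roots, checking that each occurs in a dense orbit of the appropriate representation variety, and (b) handle the imaginary (null) root direction $\delta$: the band modules supported on $\delta$ form a one-parameter family, which should contribute exactly the needed imaginary root multiplicity. The key point is that the constructible-function side sees these families correctly, so that the convolution algebra $\cM(H)$ has the Hilbert series dictated by the string/band combinatorics, which must then be matched to the PBW Hilbert series of $U(\fn(C))$.

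\textbf{Step 3: Exploit reflection functors for a reduction/bootstrap.} The generalized reflection functors (BGP-type reflections at sink/source vertices) act on $\replf(H)$ and should induce, on $\cM(H)$, the action of the braid group / Weyl group operators $T_i$, exactly as Lusztig's symmetries act on $U(\fn(C))$ — or at least on the subalgebra generated by real-root vectors. Using these, one reduces the verification of the dimension bound to finitely many "minimal" graded components modulo the $\delta$-shift: any real root can be carried by the Weyl group to a simple root, and dimension vectors decompose as $k\delta + (\text{bounded remainder})$. Periodicity under the $\delta$-translation then lets one check the bound only for residues, a finite computation (aided by the explicit gentle combinatorics).

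\textbf{Main obstacle.} I expect the hard part to be the imaginary root direction: showing that the one-parameter band families on $\replf(H,k\delta)$ contribute precisely $\dim U(\fn)_{k\delta}$ and no less — in other words, that no "collapse" of orbits or unexpected relations among convolution products of the $\theta_i$ occurs beyond the Serre relations. In the path-algebra case this is Lusztig's geometric construction; here the symmetrizer is nontrivial, so one must carefully analyze the genericity of locally free band modules, their rigidity failure (they are not rigid), and the effect of the $\Ext$-pairing on constructible functions. Controlling the passage from generic orbit data to the full dimension count in the presence of these non-rigid $\delta$-families — and confirming that reflection functors are well-behaved at the relevant vertices despite the affine (non-finite) type — is where the real work lies; the gentle description of $H$ is the main tool that makes this tractable.
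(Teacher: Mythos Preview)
Your outline has the right high-level shape (reduce to a graded dimension bound, then exploit the string-algebra structure), but several of the concrete expectations in Steps~2 and~3 do not match the actual representation theory of $H$, and would not go through as stated.

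First, the imaginary root multiplicity is $n$, not $1$: since $\tsfC_n=\sfC_n^{(1)}$ is untwisted, $\dim\fn_{k\rho}=n$ for every $k\geq 1$. So at each imaginary level you must produce $n$ linearly independent primitive elements, not one. More importantly, your picture that ``the one-parameter band families contribute the needed imaginary multiplicity'' is essentially inverted. In the paper's argument, $n-1$ of the $n$ basis elements of $\cP_{k\rho}(H)$ are built entirely from characteristic functions of \emph{string} modules (namely $\chi_{[\br_i[kn-1]]}-\chi_{[\br_{\tau(i)}[kn-1]]}$); only the last basis element requires evaluating on a band module, and even there the point is a single explicit nonvanishing computation, not a genericity or family argument.

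Second, your Step~2(a) assumes that real roots correspond to rigid locally free indecomposables sitting in dense orbits. This fails here: for a given real root $\beta$ there are in general \emph{several} non-isomorphic indecomposable locally free $H$-modules with rank vector $\beta$ (an entire equivalence class $[\bw]$ of strings), so there is no single dense orbit to point to. The paper's key move is to take the characteristic function $\Theta_\beta=\chi_{[\bw]}$ of \emph{all} of them at once and show that this sum lies in $\cP_\beta(H)$; this is proved by an explicit commutator identity $[\chi_{[\bv]},\chi_{[\br_i]}]=\chi_{[\bv[1]]}$ (Proposition~\ref{prp:key}), established via Haupt's combinatorial formula for convolution products in string algebras. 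That formula, together with an induction along the defect $d_\Ome=\bil{-,\rho}_H$ (splitting into preprojective/regular/preinjective roots), is the engine of the proof --- not reflection functors or Weyl-group reductions. Your Step~3 would need a compatibility of reflection functors with $\cM(H)$ that is neither established nor used in the paper, and the ``finite computation modulo $\delta$-shift'' you envision has no obvious substitute for the explicit Haupt-type analysis.
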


Let $n\geq 2$ and $I=\{0, 1,\ldots, n\}$.
Recall, that the Cartan matrix resp.~the Dynkin 
diagram of affine type  $\tsfC_n$, is of the form
\[
C = \left(
\begin{matrix}
\phantom{-}2&-1&\phantom{-}0&&
\\
-2&\phantom{-}2&-1&\ddots&
\\
0&-1&\phantom{-}2&\ddots&\phantom{-}0 \\
&\ddots&\ddots&\ddots&-2
\\&& 0&-1&\phantom{-}2
\end{matrix}
\right)\in\ZZ^{I\times I},\quad\text{resp.}\quad
\def\objectstyle{\scriptstyle}
\def\labelstyle{\scriptstyle}
\xymatrix @-1.2pc{ 0\ar@{=}[r]|-{>}\ar@{=}[r] &1\ar@{-}[r] &2\ar@{-}[r]& \cdots
  &(n\!-\!2)\ar@{-}[r] &(n\!-\!1)\ar@{=}[r]|-{<}\ar@{=}[r] &n},
\]
and the minimal symmetrizer of $C$ is given by $D = \diag(2,1,\ldots,1,2)$. 
Recall, that  all imaginary roots are isotropic, and we have
$\Del^+_\imag(\tsfC_n)=\ZZ_{\geq 1}\rho$ for $\rho=(1,2,2,\ldots,2,1)$.
Any orientation of the Diagram is of the form
$\Ome=\{(i_1,i'_1), (i_2,i'_2),\ldots, (i_n,i'_n)\}$
with $\{i_k,i'_k\}=\{k-1,k\}$ for $k=1,2,\ldots, n$.

Henceforth, by a slight abuse of notation, we will write $\tsfC_n$ in place of $C$ whenever $C$ is of type $\tsfC_n$. 
Since $\tsfC_n$ is of affine type, the corresponding set of positive roots
$\Del^+(\tsfC_n)\subset \NN^I$ is  the union of the real roots
$\Del_\real^+(\tsfC_n)$ and 
the isotropic roots $\ZZ_{\geq 1}\rho$ for $\rho=(1,2,\ldots,2,1)$. Note, that $\Del_\real^+(\tsfC_n)\cap\ZZp\rho=\emptyset$.  

\subsection{Outline of the proof}
It is straight forward to see that the 1-Iwanaga-Gorenstein algebra 
$H = H_\CC(\tsfC_n, D,\Omega)$ is given by the quiver 
\[
Q(\tsfC_n,\Ome):\qquad \xymatrix{
  0 \ar@(ul,dl)_{\vep_0} & 1 \ar@{-}[l]_{\eta_1}& \cdots \ar@{-}[l]_{\eta_2}&
  n \ar@{-}[l]_{\eta_n}
\ar@(ur,dr)^{\vep_{n}}}
\]
with relations  $\vep_0^2=\vep_{n}^2=0$.  Here, the arrow $\eta_k$ points to the left if $(i_k, i'_k)=(k-1,k)$, else, it points to the right.  
Thus, $H$ is a representation-infinite string algebra in  the sense of~\cite{BR}.
Following~\cite{GLS1}, in this  situation
a representation $M$ of $H$ is locally free if $M(i)$ is free as a
$\CC[\vep_i]/(\vep_i^2)$ module for $i\in\{0, n\}$.   

From the combinatorial description of the indecomposable representations
of $H$ in terms of strings  and bands~\cite{BR} we derive, 
that the set of rank vectors of the indecomposable locally free
$H$-modules is precisely the set of positive roots $\Del^+(\tsfC_n)$. Moreover, we introduce an equivalence relation on the locally free strings such that $\bw'\in [\bw]$ implies canonically $M_{\bw'}(\eta_i)=M_\bw(\eta_i)$
for all $i\in\{1,2,\ldots,n\}$,  and in particular 
$\rkv(M_{\bw'})=\rkv(M_\bw)$.
It turns out that for each real root $\beta\in\Del^+_\real(\tsfC_n)$,
there exists a locally free string $\bw$ such that the string modules
$M_{\bw'}$ with $\bw'\in [\bw]$ represent all isomorphism classes of
indecomposable locally free $H$-modules $M$ with $\rkv(M)=\bet$.

In turn, the rank vector of each band module is an isotropic root. Moreover,
for each $k\in\ZZ_{\geq 1}$  there are several 1-parameter families of
indecomposable modules coming from bands, as well as several classes of
string modules
with rank vector $k\rho$. See Section~\ref{ssec:rklf} for more details.

The infinite dimensional, nilpotent Lie algebra $\fn:=\fn(\tsfC_n)$ is the positive part of the Kac-Moody Lie algebra $\fg:=\fg(\tsfC_n)$.  
It is $\ZZ^I$-graded, more precisely
$\fn=\oplus_{\alp\in\Del^+(\tsfC_n)}\fn_\alp$,
where $\Del^+(C)$ is the positive part of the corresponding root system.
%Since $C$ is here of
%affine type $\tsfC_n$  we have $\Del^+=\Del_\real\cup \ZZ_{\geq 1}\rho$  where
%$\rho=(1,2,\ldots,2,1)$ is the minimal positive isotropic root.
Since $\tsfC_n$ is affine of untwisted type 
($\sfC_n^{(1)}$ in Kac's notation) we have
\[
\dim_\CC\fn_\alp=\begin{cases} 1 &\text{if } \alp\in\Del^+_\real(\tsfC_n),\\
                             n &\text{if } \alp=k\rho,\
\end{cases}
\]
see for example~\cite[Prp.~5.1 \& Cor.~7.4]{K1}).
On the other hand, the generalized composition algebra $\cM(H)$ can be
identified with the enveloping algebra
$U(\cP(H))$ of
the graded Lie algebra $\cP(H)\subset\cM(H)$ of
its primitive elements~\cite[Prp.~4.7]{GLS3}.
Thus, $\pi_H$ restricts to a surjective
homomorphism of graded Lie algebra $\fn(\tsfC_n)\ra\cP(H)$. 
This suggests the following strategy for the proof of Theorem~\ref{thm:princ}:

We show that for each real root $\bet\in\Del^+_\real(\tsfC_n)$, the characteristic function $\theta_\bet\neq 0$ of \emph{all} indecomposable locally free
$H$-modules $M$ with $\rkv(M)=\bet$ belongs to $\cP_\bet(H)$ and thus spans it.
It is slightly more complicated to find $n$ linearly independent elements
$(\tht_{k\rho}^{(i)})_{i=1,2,\ldots,n}$ for each isotropic root
$k\rho\in\Del_\imag^+(\tsfC_n)$.  It is however worth to mention that we can
choose the $\tht_{k\rho}^{(i)}$ such that only $\tht_{k\rho}^{(n)}$ contains
band modules in its support.  See Theorem~\ref{thm:primitive} for the exact
statement.

For the proof of this theorem, we introduce the defect 
$d_\Ome=\bil{-,\rho}_H$ on $\ZZ^I$, which allows us to divide the roots
$\Del_+(\tsfC_n)$ into three classes: preprojective ($d_\Ome(\bet)>0$),
regular ($d_\Ome(\bet)=0$) and preinjective ($d_\Ome(\bet)<0$).  For each class
we proceed by induction over the height to show
$\tht_\bet=[\tht_{\bet-\rho_j},\tht_{\rho_j}]$ for certain ``simple'' regular
roots $(\rho_j)_{j=1,2,\cdots,n}$. Finally, we set
$\tht^{(n)}_{k\rho}:=[\tht_{k\rho-\alp_i},\tht_i]$ and verify
$\tht_{k\rho}^{(n)}(M_{\bb,k,t})\neq 0$ for certain band modules $M_{(\bb,t,k)}$.

Our main tool to this end is N.~Haupt's~\cite{Haupt} description of the
multiplication of characteristic functions in $\cF(A)$ for an arbitrary
string algebra $A$.
He uses torus actions in order to express the evaluation of convolution
products in combinatorial terms, see Section~\ref{ssec:Haupt}.  

\subsection*{Conventions} We write $\NN:=\{0,1,2,\ldots\}$ for the natural
numbers, and $\ZZp:=\{1,2,3,\ldots,\}$ for the positive integers.

%%%%%%%%%%%%%%%%%%%%%%%%%%%%%%%%%%%%
%%%%%%%%%%%%%%%%%%%%%%%%%%%%%%%%%%%%

\section{Preliminary material}\label{sec1}

%%%%%%%%%%%%%%%%%%%%%%%%%%%%%%%%%%%%
%%%%%%%%%%%%%%%%%%%%%%%%%%%%%%%%%%%%
\subsection{Locally free modules}
Following~\cite[Def.~1.1]{GLS1}, in our situation with $H=H_\CC(\tsfC_n,D,\Ome)$,
an $H$-module $M$ is \emph{locally free}
if $M(i)$ is a free $\CC[\vep_i]/(\vep_i^2)$-module for $i\in\{0,n\}$.
If moreover all Auslander-Reiten translates $(\tau^k_HM)_{k\in\ZZ}$ are
locally free, we say that $M$ is $\tau$-\emph{locally free}.

By~\cite[Prp.~3.5]{GLS1} $M$ is locally free if and only if the projective
dimension of $M$ is at most 1, or equivalently the injective dimension
of $M$ is at most 1.  Thus, in particular, all projective and injective
$H$-modules are locally free. We  write $P_i:=He_i$ for the indecomposable
projective $H$-modules and $J_i:=\Hom_\CC(e_iH,\CC)$ for the indecomposable
injective $H$-modules ($i\in I$).

The  \emph{rank vector} $\rkv(M)=(\rk_{H_i} M(i))_{i\in I}$ of a locally free
$H$-module $M$ is given here by
\[
\rkv(M)_i=\begin{cases} \rk_{\CC[\vep_i]/(\vep_i^2)} M(i) &\text{ if } i\in\{0,n\},\\
\dim_\CC M(i) &\text{else.}
\end{cases}
\]
Denote by $(\alp_i)_{i\in I}$ the coordinate basis of $\ZZ^I$, and by
$E_i$ the unique locally free $H$-module with $\rkv(E_i)=\alp_i$.

Following Dlab and Ringel,  we introduce on $\ZZ^I$ a (non-symmetric) bilinear
form
\[
  \bil{-,-}_{H}\df\ZZ^I\times\ZZ^I\ra\ZZ,
\]
which is defined on the basis of simple roots $(\alp_i)_{i\in I}$ as follows:
\[
  r_{ij}=\bil{\alp_i,\alp_j}_H:=
\begin{cases}
\phantom{-}2 &\text{if } i=j\in\{0,n\},\\
\phantom{-}1 &\text{if } i=j\not\in\{0,n\},\\
    -2 &\text{if } (j,i)\in\Ome\text{ and } \{i,j\}\cap\{0,n\}\neq\emptyset,\\
    -1 &\text{if }  (j,i)\in\Ome\text{ and } \{i,j\}\cap\{0,n\}=\emptyset,\\
\phantom{-}0  &\text{else.}
\end{cases}
\]
Note, that with $R:=(r_{ij})_{i,j\in I}$ we have $R+R^t=D\tsfC_n$.
Moreover, $D^{-1}R\in\ZZ^{I\times I}$ is unitriangular up to a simultaneous permutation of rows and columns.
We have the following special case 
of~\cite[Prp.~4.1]{GLS1}:

\begin{Prop} \label{prp:ring}
  Let $M$ and $N$ be locally free $H$-modules, then we have
  \[
    \bil{\rkv(M),\rkv(N)}_H=\dim\Hom_H(M,N)-\dim\Ext_H^1(M,N).
\]
\end{Prop}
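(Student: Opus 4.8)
The statement is the Euler-form identity $\langle\rkv(M),\rkv(N)\rangle_H = \dim\Hom_H(M,N) - \dim\Ext^1_H(M,N)$ for locally free $H$-modules, and the excerpt explicitly flags it as "a special case of~\cite[Prp.~4.1]{GLS1}". So my plan is essentially to specialize the general computation of~\cite{GLS1} to the present algebra $H=H_\CC(\tsfC_n,D,\Ome)$ and the present normalization of $\langle-,-\rangle_H$, and check that the two normalizations agree.

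First I would recall that, by~\cite[Prp.~3.5]{GLS1} (quoted above), a locally free $H$-module $M$ has $\pdim_H M\le 1$, so there is a short exact sequence $0\to P^1\to P^0\to M\to 0$ with $P^0,P^1$ projective, and likewise $0\to N\to I^0\to I^1\to 0$ by the dual statement on injective dimension. Applying $\Hom_H(-,N)$ to the projective resolution of $M$ gives
\begin{equation*}
\dim\Hom_H(M,N)-\dim\Ext^1_H(M,N) = \dim\Hom_H(P^0,N)-\dim\Hom_H(P^1,N),
\end{equation*}
since the higher Ext groups vanish. Thus the left-hand side of the claimed identity depends only on the classes $[P^0]-[P^1]$ in the split Grothendieck group of projectives, i.e.\ only on $\rkv(M)$ (one checks that the multiplicities of the indecomposable projectives $P_i$ in $P^0$ minus those in $P^1$ are recovered from $\rkv(M)$, using that $\rkv$ is additive on short exact sequences of locally free modules — this additivity is where one uses $\pdim\le 1$ so that the middle term of a s.e.s.\ of locally free modules is again locally free). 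Hence both sides of the asserted equation are $\ZZ$-bilinear in $(\rkv(M),\rkv(N))$, and it suffices to verify the identity when $M=E_i$ and $N=E_j$ range over the generalized simple modules, i.e.\ to check $\langle\alp_i,\alp_j\rangle_H = \dim\Hom_H(E_i,E_j)-\dim\Ext^1_H(E_i,E_j) =: r'_{ij}$ for all $i,j\in I$.

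The remaining step is the explicit computation of $r'_{ij}$ from the quiver $Q(\tsfC_n,\Ome)$ with relations $\vep_0^2=\vep_n^2=0$. For $i=j$: $E_i$ is the module supported at vertex $i$ which is free over $H_i$, so $E_i$ is $1$-dimensional for $i\notin\{0,n\}$ (giving $\dim\End=1$, $\Ext^1=0$, hence $r'_{ii}=1$) and $E_i\cong\CC[\vep_i]/(\vep_i^2)$ for $i\in\{0,n\}$ (a straightforward computation of a projective resolution — $E_i$ is itself a quotient of $P_i$ with kernel again $\cong E_i$ up to the arrows at $i$ — gives $\dim\End(E_i)=2$ and $\dim\Ext^1_H(E_i,E_i)=0$, matching $r'_{ii}=2$; the vanishing of self-extensions uses that $E_i$ is locally free so $\pdim\le 1$, together with an explicit check that the relevant connecting map is surjective). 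For $i\ne j$: $\Hom_H(E_i,E_j)=0$ always, and $\Ext^1_H(E_i,E_j)$ is computed from the arrows between $i$ and $j$ in $Q$: there is a nonzero contribution exactly when $\{i,j\}$ are adjacent and the arrow between them points from $j$ to $i$, i.e.\ $(j,i)\in\Ome$, with the dimension of $\Ext^1$ equal to $1$ in the case $\{i,j\}\cap\{0,n\}=\emptyset$ and equal to $2$ when one of $i,j$ lies in $\{0,n\}$ (because then the arrow "carries" the $2$-dimensional module at that vertex; one sees this from the minimal projective presentation of $E_i$). Comparing with the definition of $r_{ij}=\langle\alp_i,\alp_j\rangle_H$ in the displayed formula above, we get $r'_{ij}=r_{ij}$ in every case, which completes the proof.

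The main obstacle is the sign/normalization bookkeeping in the last step: one must make sure that the convention $r_{ij}=-2$ resp.\ $-1$ precisely when $(j,i)\in\Ome$ (not $(i,j)\in\Ome$) matches the direction in which $\Ext^1_H(E_i,E_j)$ is non-zero, and that the "doubling" at the special vertices $0,n$ — both in $r_{ii}=2$ and in the off-diagonal $-2$ — is reproduced by the $2$-dimensionality of $E_0,E_n$ in the Hom/Ext computation. Once that is pinned down, together with the bilinearity reduction and the additivity of $\rkv$, the result follows; alternatively, one may simply cite~\cite[Prp.~4.1]{GLS1} and note that the form $\langle-,-\rangle_H$ defined here coincides with theirs by comparing the matrices $R+R^t=D\tsfC_n$.
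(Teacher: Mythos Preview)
The paper does not actually prove this proposition: it is stated with the attribution ``the following special case of~\cite[Prp.~4.1]{GLS1}'' and no further argument is given. Your proposal is a correct self-contained sketch of the standard Euler-form computation (reduce by bilinearity to the generalized simples $E_i$, then compute $\Hom$ and $\Ext^1$ between them from the quiver description), which is precisely what underlies the cited result in~\cite{GLS1}; indeed you note at the end that one may ``simply cite~\cite[Prp.~4.1]{GLS1}'', and that is exactly what the paper does. So your approach is not different in spirit, just more explicit than the paper's bare citation; the only places to tighten are the rigidity $\Ext^1_H(E_i,E_i)=0$ for $i\in\{0,n\}$ and the exact value $\dim\Ext^1_H(E_i,E_j)=2$ when $\{i,j\}\cap\{0,n\}\neq\emptyset$, both of which are handled in~\cite{GLS1} in full generality.
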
 
Moreover, we have the following version of~\cite[Prp.~11.5]{GLS1}:
\begin{Prop}  \label{Prp:Cox}
  \begin{itemize}
  \item[(a)] With $c_H:=-R^{-1}R^t$ we have $c_H\cdot\rkv(P_i)=-\rkv(J_i)$ for all
    $i\in I$, i.e. $c_H$ is the Coxeter transformation for $H$.
  \item[(b)] Suppose the $M$ and its Auslander-Reiten translate $\tau_HM$
    are locally free and indecomposable, then we have
    \[ \rkv(\tau_H M)= c_H\cdot \rkv(M).\]
  \end{itemize}
\end{Prop}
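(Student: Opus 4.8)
The plan is to treat the two parts separately: part (a) is pure linear algebra resting on the Euler–form identity of Proposition~\ref{prp:ring}, and it then supplies the arithmetic needed for part (b), which is the specialization to $H=H_\CC(\tsfC_n,D,\Ome)$ of the general statement \cite[Prp.~11.5]{GLS1}.

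For part (a): since $P_i$ is projective we have $\Ext^1_H(P_i,-)=0$, so Proposition~\ref{prp:ring} gives, for every locally free $N$,
\[
\bil{\rkv(P_i),\rkv(N)}_H=\dim_\CC\Hom_H(P_i,N)=\dim_\CC N(i)=(D\rkv(N))_i,
\]
using $\Hom_H(He_i,N)\cong N(i)$ and $\dim_\CC N(i)=d_i\,\rkv(N)_i$ with $d_i=(D)_{ii}$. Since the vectors $\rkv(E_j)=\alp_j$ span $\ZZ^I$, this extends by linearity to $\bil{\rkv(P_i),v}_H=(Dv)_i$ for all $v\in\ZZ^I$. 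Writing $\bil{x,y}_H=x^tRy$ and noting that $R$ is invertible over $\Q$ (because $D^{-1}R$ is unitriangular up to a simultaneous permutation, so $\det R=\pm\det D\neq0$), this forces $\rkv(P_i)=R^{-t}D\alp_i$. The dual computation, with $J_i$ injective so that $\Ext^1_H(-,J_i)=0$ and $\Hom_H(N,J_i)\cong N(i)$, gives $\bil{v,\rkv(J_i)}_H=(Dv)_i$ and hence $\rkv(J_i)=R^{-1}D\alp_i$. Therefore
\[
c_H\cdot\rkv(P_i)=-R^{-1}R^t\cdot R^{-t}D\alp_i=-R^{-1}D\alp_i=-\rkv(J_i),
\]
which is precisely the characterizing relation of the Coxeter transformation.

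For part (b): I would reduce to proving $\rkv(\tau_HM)=c_H\cdot\rkv(M)$ for $M$ indecomposable and non-projective with $M$ and $\tau_HM$ locally free. Since $\pdim_HM\le1$ by \cite[Prp.~3.5]{GLS1}, the minimal projective presentation of $M$ is a short exact sequence $0\to P'\to P\to M\to0$, whence $\rkv(M)=\rkv(P)-\rkv(P')$, and part (a) yields $c_H\cdot\rkv(M)=\rkv(\nu P')-\rkv(\nu P)$, where $\nu=D\Hom_H(-,H)$ is the Nakayama functor and $\nu P_i=J_i$. Applying $\nu$ to the presentation produces the four-term exact sequence $0\to\tau_HM\to\nu P'\to\nu P\to\nu M\to0$, along which the rank vector is additive once all its terms are known to be locally free; then $\rkv(\tau_HM)=\rkv(\nu P')-\rkv(\nu P)+\rkv(\nu M)$, and it remains only to see that $\nu M=0$.

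The step I expect to be the main obstacle is exactly this last point: controlling $\nu M=D\Hom_H(M,H)$, together with the local freeness of the intermediate terms of the Nakayama sequence. For a general locally free indecomposable $M$ the module $\nu M$ need not vanish, and it is precisely here that one must exploit the stronger hypothesis that $\tau_HM$ — not merely $M$ — is locally free; I would deduce it from the $1$-Iwanaga--Gorenstein structure of $H$ as in \cite[\S 11]{GLS1} (alternatively it can be checked by hand inside the explicit quiver $Q(\tsfC_n,\Ome)$ with relations $\vep_0^2=\vep_n^2=0$ using the string description of the modules involved). Granting this, the computation above closes the proof.
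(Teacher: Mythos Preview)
Your argument for (a) is correct and is exactly the paper's approach, written out in more detail: the paper records the identities
\[
c_i\delta_{ij}=\rkv(P_i)^tR\,\alp_j=\alp_i^tR\,\rkv(J_j),
\]
which are your equations $\rkv(P_i)=R^{-t}D\alp_i$ and $\rkv(J_i)=R^{-1}D\alp_i$ in disguise, and then concludes.

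For (b) the paper does not argue via the Nakayama four-term sequence at all; it simply invokes \cite[Prp.~11.5(i)]{GLS1} (which already contains the desired rank formula for $\tau_H$ in the general $H(C,D,\Ome)$ setting) together with the identification of $c_H$ from \cite[Sec.~3.5]{GLS1}. Your route via $0\to\tau_HM\to\nu P'\to\nu P\to\nu M\to 0$ is the standard derivation behind that proposition, and your reduction is arithmetically correct: since $\pdim_HM\le 1$ one has $\tau_HM\cong D\Ext^1_H(M,H)$ and $\nu M\cong D\Hom_H(M,H)$, so the whole question collapses to $\Hom_H(M,H)=0$ for $M$ indecomposable locally free non-projective. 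You are right that this is the crux and that it uses the $1$-Iwanaga--Gorenstein structure; once it holds, the sequence becomes short with all three terms locally free, so additivity of $\rkv$ is immediate and no separate ``local freeness of intermediate terms'' check is needed. In short: same proof, the paper just outsources the Nakayama computation to the reference.
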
  
\begin{proof}
  (a)  By Proposition~\ref{prp:ring} we have
  \[
    c_i\del_{i,j}=\rkv(P_i)^t\cdot R\cdot\alp_j=\alp^t_i\cdot R\cdot \rkv(J_j)
  \]  
  for all $i,j\in I$, where we abbreviated $c_i:=D_{ii}$ for all $i$.
This implies easily our claim.

  (b) Follows from (a) by comparing~\cite[Prp.~11.5~(i)]{GLS1}
  with the discussion at the end of~\cite[Sec.~3.5]{GLS1}.  
\end{proof}  

\subsection{Roots and defect for  affine type $\tsfC_n$}\label{ssec:roots-tCn}
For $1\leq i\leq j\leq n$ we  set
\[
  \alp_{ij}:=\sum_{k=i}^j \alp_k,
\]
and for $1\leq i\leq j\leq n-1$ define
\[
  \bet_{ij}:= \alp_{in}+\alp_{j,n-1}.
\]
Thus, the $\alp_{ij}$ and $\bet_{ij}$ can be identified with the positive
roots for a finite root system of Dynkin type $\sfC_n$.

The minimal positive isotropic root for the affine type $\tsfC_n$ is
\[
  \rho:= (1,2,2,\ldots, 2,1)\in\Del^+_\imag(\tsfC_n)\subset\ZZ^I.
\]
Thus,  $\Del^+_\imag(\tsfC_n)=\ZZp\rho$.  It is well known, that
\begin{multline*}
\Del^+_\real(\tsfC_n)=  \{k\rho+\alp_{ij}\mid k\in\NN, 1\leq i\leq j\leq n\}
\textstyle{\coprod}
\{k\rho-\alp_{ij}\mid k\in\ZZp, 1\leq i\leq j\leq n\}\\
\!\!\textstyle{\coprod}
  \{k\rho+\bet_{ij}\mid k\in\NN, 1\leq i\leq j\leq n\!-\!1\}\textstyle{\coprod}
  \{k\rho-\bet_{ij}\mid k\in\ZZp, 1\leq i\leq j\leq n\!-\!1\},
\end{multline*}
since $\tsfC_n$ (i.e. $\sfC_n^{(1)}$ in Kac's notation) is one of the untwisted
affine types, see~\cite[Prp.~6.3]{K1}.  

With our notation, the roots of the form $k\rho\pm\bet_{ii}$ are the long ones,
all remaining real roots are short.

The linear from $d_\Ome\df\ZZ^I\ra\ZZ, \alp\mapsto\bil{\alp,\rho}_H$ is 
the \emph{defect}.   
A root $\alp\in\Del^+(\tsfC_n)$ is called \emph{preprojective},
resp.~\emph{regular},  resp.~\emph{preinjective} with respect to $\Ome$ if
$d_\Ome(\alp)>0$, resp.~$d_\Ome(\alp)=0$ resp.~$d_\Ome(\alp)< 0$.  

We leave it as an (easy) exercise to show that
\begin{equation} \label{eq:defect}
  R\cdot\rho =2\ (\!\sum_{\substack{i\in Q^o_0\\ i\text{ is a sink}}}\! \alp_i\ -
  \sum_{\substack{j\in Q^o_0\\ j\text{ is a source}}}\!\!\!\alp_j) ,
\end{equation}
where $Q^o=Q^o(\tsfC_n,\Ome)$ is obtained from $Q(\tsfC_n,\Ome)$ by deleting the
loops $\vep_0$ and $\vep_n$.

\subsection{Strings and bands for $H$}
\label{ssec:pre-string}
As mentioned in the introduction, our algebra $H=H_\CC(\tsfC_n,\Ome,D)$ is a string algebra in the sense of Butler and Ringel~\cite[Sec.~3]{BR}. 
%$H$ is even a gentle algebra in the sense of Assem and Skowronski  

Recall that for a quiver
$Q=(Q_0, Q_1, s, t)$  we denote by $Q_0$ the set of vertices, $Q_1$ the set
of arrows, and the functions $s, t\df Q_1\ra Q_0$ determine the start- and
terminal point of each arrow.  For $Q=Q(\tsfC_n,\Ome)$ we have $Q_0=I$ and
$Q_1=\{\vep_0, \eta_1,\ldots,\eta_n,\vep_n\}$. We have in our situation
$s(\vep_i)=i=t(\vep_i)$ for $i\in\{0,n\}$ and
$(t(\eta_j), s(\eta_j))=(i_j,i'_j)\in\Ome$ with the notation from the
introduction.

We define the set of letters
$Q_1^\pm:=Q_1\cup \{\bet^{-1}\mid\bet\in Q_1\}$, and extend the functions $s$ and $t$ to
$Q_1^\pm$ by $s(\bet^{-1}):=t(\bet)$ and $t(\bet^{-1})=s(\bet)$ for all
$\bet\in Q_1$. 
Moreover, we agree that $(\bet^{-1})^{-1}:=\bet$.
In our situation, the set $\St(H)$ of strings consists of the trivial words
$\II^{\pm 1}_{i}$ for $i\in I$ and the words $\bw=w_1 w_2\cdots w_l$ with
letters $w_j\in Q_1^\pm$ such that 
$s(w_{j-1})=t(w_j)$ and $w_{j-1}\neq w_j^{\pm 1}$ for $j=2,3,\ldots,l$. Thus,
the inverse $\bw^{-1}:= w_{l}^{-1}w_{l-1}^{-1}\cdots w_1^{-1}$ of a string
$\bw=w_1w_2\cdots w_l$ is also a string.

We agree that $s(\II_i^{\pm 1})=i=t(\II_i^{\pm 1})$ and $s(w):=s(w_l)$ whilst
$t(w)=t(w_1)$ for $w=w_1\cdots w_l$. We call the strings $\II_i$ and those
which consists only of letters from the set $Q_1$ \emph{direct strings},
and the inverses of direct strings are called \emph{inverse strings}.

Compatible strings can be concatenated:
if $\bv=v_1\cdots v_k$ with $s(\bv)=t(\bw)$ and
$v_k\neq w_1^{\pm 1}$ we have $\bv\cdot\bw:=v_1\cdots v_kw_1\cdots w_l$.
We  agree moreover
that $\II_{i-1}\eta_i=\eta_i\II_i=\eta_i$ if $(i-1,i)\in\Ome$ resp.
$\II_{i-1}\eta_i^{-1}=\eta_i^{-1}=\eta_i^{-1}\II_{i}$ if $(i,i-1)\in\Ome$, and
$\vep_0^{\pm 1}\II_0=\vep_0^{\pm 1}$ resp. $\II_n\vep_n^{\pm 1}=\vep_n^{\pm 1}$.

A \emph{band}  is a string
$\bb$ such that $\bb\!\cdot\!\bb$ is a string. A band $\bb$ is \emph{primitive} if
$\bb\neq \bv^n$ for any $\bv\in\St(H)$ and $n\geq 2$.
Note that if $\bb=b_1b_2\cdots b_l$ is a band, then also the \emph{rotation}
$\bb^{(1)}:=b_2b_3\cdots b_lb_1$ is a band. This allows to define recursively
$\bb^{(r)}$ for all $r\in\ZZ$. We denote by
$\Ba(H):=\{(\bb,*)\mid \bb\ \text{is a band}\}$  the set of bands for $H$,
and $\pBa(H)\subset\Ba(H)$ is the set of primitive bands.

\subsection{Windings}
Let $Q=(Q_0,Q_1, s_Q, t_Q)$ and $S=(S_0, S_1, s_S, t_S)$ be two quivers.
Following Krause~\cite{Krause} and Haupt~\cite[Sec.~2.3]{Haupt} a
\emph{winding} (for $Q$)  is a morphism of quivers $F\df S\ra Q$ 
such that
\begin{itemize}
\item[(s)]
  If $\alp, \bet\in S_1$ with $s_S(\alp)=s_S(\bet)$ and $\alp\neq\bet$, then
  $F(\alp)\neq F(\bet)$.
\item[(t)]
  If $\alp, \bet\in S_1$ with $t_S(\alp)=t_S(\bet)$ and $\alp\neq\bet$, then
  $F(\alp)\neq F(\bet)$.
\end{itemize}
If $F\df S\ra Q$ and $F'\df S'\ra Q$ are windings for $Q$, then a
\emph{morphism} from $F'$ to $F$ is a morphism of quivers
$\hat{F}\df S'\ra S$ such that
$F'=F\circ\hat{F}$.  We write $\Mor(F',F)$ for the set of all morphisms from
$F'$ to $F$, and note that the elements of $\Mor(F',F)$ are automatically
windings.  

If $(Q,I)$ defines a string algebra,  we say that a winding $F\df S\ra Q$ is
\emph{admissible} if moreover the following conditions hold:
\begin{itemize}
\item[(A)]
  The underlying graph of $S$ is of Dynkin type $\sfA$ or $\tsfA$.
\item[(P)]
  If $G$ is an automorphism of $F$  then $G=\id_S$.  
\item[(R)]
  There exists no path $a=\alp_1\alp_2\cdots\alp_l$ in $S$ with $F(a)\in I$. 
\end{itemize}

For each $\bw=w_1w_2\cdots w_l\in\St(H)$ we can define an admissible winding
  $F_\bw\df S(\bw)\ra Q(\tsfC_n,\Ome)$ as follows:  The underlying graph of $S(\bw)$ is of Dynkin type $\sfA_{l+1}$
  \[
    \xymatrix{1\ar@{-}[r]^{\alp_1} & 2\ar@{-}[r]^{\alp_2} &3\ar@{.}[rr] &&
      l\ar@{-}^>>>>{\alp_l}[r]&(l\!+\!1)}
    \]
 with the arrow $\alp_i$ pointing to the left if $w_i$ is a direct letter, and
 pointing to the right otherwise.  Moreover, for $i=1,2,\ldots l$ we set
 $F(\alp_i)=w_i$ if $w_i$ is a  direct letter and $F(\alp_i)=w_i^{-1}$ if
 $w_i$ is an inverse letter.  Finally, we set $F(i)=t(w_i)$ for $i=1,2,\ldots, l$ and $F(l+1)=s(s_l)$. 
 For the trivial words $\II_{i}^{\pm 1}$ we agree that
$S(\II_i^{\pm})$ is of Dynkin type $\sfA_1$, and $F_{\II_i}(1)=i$. 
 Note, that $F_\bw$ is isomorphic to $F_{\bw^{-1}}$.

 If $(\bb,*)\in\Ba(H)$ with $\bb=b_1b_2\cdots b_l$,
 we define a winding $F_{(\bb,*)}\df S(b,*)\ra Q(\tsfC_n,\Ome)$,
 with the underlying graph of $S(\bb,*)$ of type $\tsfA_{l-1}$
 \[
 \xymatrix{&&\ar@{-}[lld]_{\alp_1}0 &&\\
  1\ar@{-}[r]_{\alp_2}&2\ar@{-}[r]_{\alp_3}&\ar@{.}[r]
  &\ar@{-}[r]_{\alp_{l-1}}&l-1\ar@{-}[llu]_{\alp_l}}     
\]
with the arrow $\alp_i$ pointing anti-clockwise if $b_i$ is a direct letter
and pointing clockwise otherwise.  Moreover, $F_{(\bb,*)}(\alp_i)=b_i$ if
$b_i$ is a direct letter and $\tilde{F}_{(\bb,*)}(\alp_i)=b_i^{-1}$ if $b_i$ is an inverse letter.  
Finally, $F_{(b,*)}(i)=t(b_{i+1})$ for $i=0,1,\ldots, l-1$.
Clearly, $F_{(b,*)}$ is admissible if and only if the band  $(b,*)$ is primitive.
Usually we visualize  a winding $F\df S\ra Q$  by displaying
the quiver $S$, where we label the vertices and arrows with the corresponding values of $F$. Note, that for $w\in\St(H)$ the
admissible windings $F_\bw$ and $F_{\bw^{-1}}$ are isomorphic. Similarly,
for $(\bb,*)\in\pBa(H)$ the admissible windings $F_{(\bb,*)}$,
$F_{(\bb^{(j)},*)}$ and $F_{(\bb^{-1},*)}$ are isomorphic.

It is easy to see, that each admissible winding for $Q(\tsfC_n,\Ome)$ is isomorphic to some $F_\bx$ with $\bx\in\St(H)\cup\pBa(H)$.

\subsection{String- and Band Modules}
If $F\df S\ra Q(\tsfC_n, \Ome)$ is an admissible winding, we obtain a 
push-forward functor $F^\lam\df\CC S\lmd\ra H\lmd$. See for example~\cite[Sec.~2.3]{Haupt} for details.

Let $S$ be a quiver of type $\sfA$, then we define the indecomposable
$\CC$-linear representation $\II_S$  of $S$ by taking
$\II_S(i)=\CC$ for all $i\in S_0$ and $\II_S(\alp)=1_\CC$ for each
arrow $\alp\in S_1$.  For $\bw\in\St(H)$  we define the representation
\[
  M_\bw:=F_\bw^\lam(\II_{S(\bw)}).
\]  
For $(\bb,*)\in\pBa(H)$, $t\in\CC^*$ and $m\in\ZZp$ we define the
indecomposable homogeneous regular $\CC$-linear representation
$R_{(\bb,*)}^{(t,m)}$ of $S(\bb,*)$ by
$R_{(\bb,*)}^{(t,m)}(i)=\CC^m$ for all vertices $i$ of $S(\bb,*)$, and
$R_{(\bb,*)}^{(t,m)}(\alp_1)=J_m(t)$ the indecomposable $m\times m$ 
Jordan block with generalized eigenvalue $t$, if $b_1$ is a direct letter.  Otherwise we set $R_{(\bb,*)}^{(t,m)}(\alp_1)=J_m(t^{-1})$.  
For the remaining arrows we have
$R_{(\bb,*)}^{(t,m)}(\alp_i)=\id_m$, the $m\times m$ unit matrix, regardless of the
orientation. This allows us to define the representation
\[
  M_{(\bb,t,m)}:=F_{(\bb,*)}^\lam(R_{(\bb,*)}^{(t,m)}).  
\]
It is well-known that the representations $M_\bw$ for $\bw\in\St(H)$ and
$M_{(\bb,t,m)}$ for $(\bb,*)\in\pBa(H)$ are indecomposable, and we have obvious
isomorphisms $M_\bw\cong M_{\bw^{-1}}$ as well as
$M_{(\bb,t,m)}\cong M_{(\bb^{(j)},t,m)}\cong M_{(\bb^{-1},t^{-1},m)}$.
Following Butler and Ringel~\cite[p.~161]{BR} each indecomposable $H$-module
is isomorphic to  $M_\bw$ for some $w\in\St(H)$ or to some $M_{(\bb,t,m)}$ for
some $(\bb,*)\in\pBa(H)$ and $(t,m)\in\CC^*\times \ZZp$.  Moreover,
the only isomorphisms between those string- and band modules are the ones
which come from the above discussed isomorphisms.

\subsection{Locally free strings}\label{ssec:locfree}
It follows from the definitions, that all band modules $M_{(\bb,t,m)}$ for
$(\bb,*)\in\pBa(H)$ and $(t,m)\in\CC^*\times\ZZp$ are locally free. 

We say that a string $\bw=w_1\cdots w_l\in\St(H)$ is \emph{locally free}
if $s(\bw)\in\{0,n\}$ implies $w_l=\vep_{s(w)}^{\pm 1}$ and $t(\bw)\in\{0,n\}$
implies
$w_1=\vep_{t(w)}^{\pm 1}$. We agree that also the strings $\II_i^{\pm 1}$ for
$i=1,2,\ldots, n-1$ are locally free.
Note that $\bw=w_1w_2\cdots w_l$ is locally free if and
only if $\bw^{-1}=w_l^{-1}w_{l-1}^{-1}\cdots w_1^{-1}$ is locally free.
We denote by $\Stlf(H)\subset\St(H)$ the corresponding
set of locally free strings.
%In particular, $\br_i\in\Stlf(H)$ for all $i\in I'$.
It is easy to see that a string module $M_\bw$ for $\bw\in\St(H)$ 
is locally free if and only if $\bw$ is a locally free string.

We define for $\bx\in\Stlf(H)\cup\Ba(H)$ the rank vector
$\rks(\bx)=\operatorname{rk}(\bx)_{i\in I}\in\NN^I$   by
\begin{equation} \label{eq:rkv}
  \operatorname{rk}(\bx)_i:=\begin{cases} \abs{F_\bx^{-1}(i)}/2 &\text{ if } i\in\{0,n\},\\
                   \abs{F_\bx^{-1}(i)} &\text{else.}
   \end{cases}
  \end{equation}
We have then obviously
\[  
\rkv(M_\bx)=\rks(\bx)\quad\text{for all}\quad \bx\in\Stlf(H).
\]
On the the other hand, for $(\bb,*)\in\pBa(H)$ with $\bb=b_1b_2\cdots b_l$
we have $l=h(\bb)(2n+2)$ for some $h(\bb)\in\ZZp$, and it is easy to see
that
\[
  \rkv M_{(\bb,t,m)}=m\cdot\rks(\bb,*)=m\cdot h(\bb)\cdot (1,2,2,\ldots, 2,1)
\]
for  all $(t,m)\in\CC^*\times\ZZp$.

\subsection{Algebras of constructible functions} \label{ssec:AlgConstrF}
Let $Q$ be a quiver and $I\subset \CC Q$ an admissible ideal.  Then $H:=\CC Q/I$ is a finite dimensional basic $\CC$-algebra. 
Following~\cite[Sec.~10.19]{Lu}
we consider for each dimension vector $\bd\in\NN^{Q_0}$ the
$\CC$-vectorspace $\cF_\bd(H)$ of $G_\bd$-invariant constructible functions
on the affine representation variety $\rep(H,\bd)$ of $\bd$-dimensional
representations of $H$.  Here, the group 
$G_\bd:=\times_{i\in Q_0} \GL_\bd(\CC)$
acts on $\rep(H,\bd)$ by conjugation, such that the orbits correspond to
the isomorphism classes of representations of $H$ which have dimension vector $\bd$.

If $X\in\rep(A,\bd)$ we write $\cO(X):=G_\bd . X\subset\rep(A,\bd)$ for
its orbit. Since $\cO(X)$ is locally closed, the characteristic function
of the orbit $\chi_{\cO(X)}$ is an element of  $\cF_\bd(H)$. 

Let $H=\CC Q/I$ be a string algebra. For $\bw\in\St(H)$ we abbreviate
\[
  \chi_\bw:=\chi_{\cO(M_\bw)}
\]
for the characteristic function of the orbit of the string module $M_\bw$.
As just discussed, $\cO(M_\bw)\subset\rep(H,\dimv(M_\bw))$ is a locally closed
subset, and thus $\chi_\bw$ is a $G_\bd$-invariant, constructible function.

The graded vector space
\[
  \cF(H):=\bigoplus_{\bd\in\NN^{Q_0}} \cF_\bd(H)
\]
becomes a graded associative algebra with the usual convolution product
\[
  \phi'*\phi''(X):=\int_{Y\in\Gr^A_\bd(X)}\phi'(Y)\phi''(X/Y) d\!\chi,
\]
for $\phi'\in\cF_\bd(H)$ and $\phi''\in\cF_{\bd'}(H)$. 
Here, $\Gr^A_\bd(X)$ is the quiver Grassmannian
of $\bd$-dimensional subrepresentations of the representation
$X\in{\rep(H,\bd+\bd')}$. Moreover, our measure is given by the topological
Euler characteristic (with respect to Borel-Moore homology) $\chi$
for constructible sets, i.e.~we have
\[
  \int_{Y\in G} \psi(Y)d\!\chi :=\sum_{a\in\CC} a\cdot\chi(\psi^{-1}(a))
\]
for each constructible function $\psi\df G\ra\CC$. It is known that the map
\[
  c\df\cF(H)\ra\cF(H\times H) \text{ with } c(\psi)(X,Y):=\psi(X\oplus Y),
\]
is a  homomorphism of algebras of constructible functions,
see for example the proof of~\cite[Prp.~4.5]{GLS3}.  Note, that
$c(F_\bd(H))\subset \sum_{\bd'+\bd''=\bd}\cF_{\bd',\bd''}(H\times H)$.
We may view  $\cF(H)\otimes\cF(H)$ as a subalgebra of $\cF(H\times H)$
by setting $(\phi'\otimes\phi'')(X_1, X_2)=\phi'(X_1)\cdot\phi''(X_2)$. 

If $H=H_\CC(C,D,\Ome)$ is a GLS-algebra in the sense of~\cite{GLS1}, we may
consider the generalized composition algebra $\cM(H)$, 
namely the subalgebra of $\cF(H)$, 
which is generated by the
characteristic functions $\theta_i:=\chi_{\cO(E_i)}$ with $i\in I$, where
the modules $E_i$ are the generalized simple, locally free $H$-modules with rank vector
$\rk(E_i)=\alp_i$ corresponding to the simple roots. Since the functions
$\theta_i\in\cF_{\alp_i}(H)$ are homogeneous, $\cM(H)$ is also a graded algebra.
More precisely,  we have
\[
  \cM(H)=\bigoplus_{\br\in\NN^I} \cM_\br(H)
\]
where the support of functions from $\cM_\br(H)$ is contained in the
irreducible, open subvariety of locally free representations
$\replf(H,\br)\subset\rep(A,D\br)$, see~\cite[Sec.~4.2]{GLS3}.

The  morphism $c\df\cF(H)\ra\cF(H\times H)$ induces a
comultiplication $\Del\df\cM(H)\ra\cM(H)\otimes\cM(H)$, which gives
the space $\cM(H)$ the structure of
a cocommutative  bialgebra, see~\cite[Prp.~4.7]{GLS3}.  Recall, that
$\phi\in\cM(H)$ is \emph{primitive} if $\Del(\phi)=\phi\otimes 1+ 1\otimes\phi$.
In our situation, $\phi\in\cM_\br(H)$ is primitive if and only if its support
$\supp(\phi)\subset\replf(H,\br)$ consists only of \emph{indecomposable},
locally free representations, see~\cite[Lemma 4.6]{GLS3}. 

It is easy to see that the space of primitive elements $\cP(H)\subset\cM(H)$
becomes a Lie algebra with the usual commutator
$[\phi',\phi'']:=\phi'*\phi''-\phi''\phi'$.  In fact, the universal enveloping
algebra $U(\cP(H))$ is, as a bialgebra, isomorphic to $(\cM,*,\Del)$,
see~\cite[Prp.~4.7]{GLS3}.  

We have the following related result:
\begin{Lem} \label{lem:indec-mult}
  Let $X,Y\in A\lmd$ be indecomposable modules. Then  the support
\[
\chi_{\cO(X)} *\chi_{\cO(Y)} -\chi_{\cO(X\oplus Y)}\in\cF(A)
\]
consists only of indecomposable modules.  In particular, the same holds
for
\[
[\chi_{\cO(X)},\chi_{\cO(Y)}]:= \chi_{\cO(X)}*\chi_{\cO(Y)}-\chi_{\cO(Y)}*\chi_{\cO(X)}. 
\]
\end{Lem}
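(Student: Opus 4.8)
The plan is to reduce to the first assertion and then compute the relevant convolution product pointwise by means of a torus action. Since $X\oplus Y\cong Y\oplus X$, we have
\[
[\chi_{\cO(X)},\chi_{\cO(Y)}]=\bigl(\chi_{\cO(X)}*\chi_{\cO(Y)}-\chi_{\cO(X\oplus Y)}\bigr)-\bigl(\chi_{\cO(Y)}*\chi_{\cO(X)}-\chi_{\cO(X\oplus Y)}\bigr),
\]
so the ``in particular'' follows once the first assertion is established for the pairs $(X,Y)$ and $(Y,X)$; note also that if $X\cong Y$ this commutator is $0$, so for that statement we may assume $X\not\cong Y$. Fix a decomposable module $Z$; we may assume $\dimv Z=\dimv X+\dimv Y$, since both terms vanish at every other $Z$. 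Unwinding the definition of $*$ gives $(\chi_{\cO(X)}*\chi_{\cO(Y)})(Z)=\chi(E_Z)$, where
\[
E_Z:=\{\,U\in\Gr^A_{\dimv X}(Z)\mid U\cong X\ \text{and}\ Z/U\cong Y\,\}
\]
is a constructible subset of the projective quiver Grassmannian $\Gr^A_{\dimv X}(Z)$, and the task is to prove $\chi(E_Z)=\chi_{\cO(X\oplus Y)}(Z)$, i.e. $\chi(E_Z)=1$ if $Z\cong X\oplus Y$ and $\chi(E_Z)=0$ otherwise.

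For this I would fix a Krull--Schmidt decomposition $Z=Z_1\oplus\cdots\oplus Z_r$ with $r\geq 2$ and each $Z_i$ indecomposable, and let $\CC^*$ act on $Z$ by the automorphisms $\lambda\cdot(z_1,\dots,z_r)=(z_1,\lambda z_2,\dots,\lambda^{\,r-1}z_r)$, so that the summands carry pairwise distinct weights. The induced algebraic $\CC^*$-action on $\Gr^A_{\dimv X}(Z)$ preserves $E_Z$, since an automorphism of $Z$ carries a submodule isomorphic to $X$ with quotient isomorphic to $Y$ to another such; hence $\chi(E_Z)=\chi(E_Z^{\CC^*})$. A $\CC^*$-fixed submodule $U$ is a sum of its weight spaces, and distinctness of the weights forces $U=\bigoplus_i(U\cap Z_i)$ with each $U\cap Z_i$ a submodule of $Z_i$. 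Now the Krull--Schmidt bookkeeping: as $X$ is indecomposable there is a unique $j$ with $U\cap Z_j\cong X$ and $U\cap Z_i=0$ for $i\neq j$; as $Y$ is indecomposable there is a unique $k$ with $Z_k/(U\cap Z_k)\cong Y$ and $U\cap Z_i=Z_i$ for $i\neq k$. If $j=k$ then $Z_i=0$ for all $i\neq j$, impossible since $r\geq 2$; if $j\neq k$ then $Z_i=0$ for all $i\notin\{j,k\}$, so $r=2$, and moreover $Z_j\cong U\cap Z_j\cong X$ and $Z_k\cong Z_k/(U\cap Z_k)\cong Y$, whence $Z\cong X\oplus Y$ and the only fixed point is $U=Z_j$ --- there being just one admissible $j$ precisely because $X\not\cong Y$. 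Therefore $E_Z^{\CC^*}$ is empty unless $Z\cong X\oplus Y$, in which case it is a single point, and $\chi(E_Z)=\chi_{\cO(X\oplus Y)}(Z)$ as wanted.

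The step I expect to need the most care is the equality $\chi(E_Z)=\chi(E_Z^{\CC^*})$ for the \emph{constructible, non-closed} set $E_Z$. This is an instance of the standard fact that the Borel--Moore Euler characteristic of a $\CC^*$-stable constructible subset of a variety equals that of its $\CC^*$-fixed locus: the complement $E_Z\setminus E_Z^{\CC^*}$ is, constructibly, a union of one-dimensional $\CC^*$-orbits, each with Euler characteristic $\chi(\CC^*)=0$, and hence contributes nothing. The remaining ingredients --- that the $\CC^*$-fixed submodules of $Z$ are exactly the direct sums $\bigoplus_i U_i$ with $U_i\subseteq Z_i$ a submodule, together with the elementary Krull--Schmidt case distinction above --- are routine. (The argument genuinely uses $X\not\cong Y$, but nothing is lost: when $X\cong Y$ the commutator, which is what is actually invoked later, is identically zero.)
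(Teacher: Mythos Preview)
Your proof is correct and takes essentially the same route as the paper: both argue via a $\CC^*$-action on the quiver Grassmannian (the paper simply cites the argument in \cite[Prp.~3.2]{DWZ2} for the Euler-characteristic step), showing that for decomposable $Z\not\cong X\oplus Y$ the set $E_Z$ has no torus-fixed points. Your version is a bit more thorough---you work with the full Krull--Schmidt decomposition rather than the paper's arbitrary two-term splitting $Z=Z_1\oplus Z_2$, you treat the case $Z\cong X\oplus Y$ explicitly, and you correctly flag that the first assertion as stated needs $X\not\cong Y$ (a point the paper glosses over; only the commutator version is invoked later, and that one is fine).
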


\begin{proof}
  Clearly, the first claim implies the second one. Let $Z:=Z_1\oplus Z_2$ with
  $\dimv Z=\dimv X+\dimv Y$ and $Z_1\neq 0\neq Z_2$. Consider $U\leq Z$
  with $U\cong X$ and $Z/U\cong Y$.  Then, $Z\not\cong X\oplus Y$ implies, 
  $U\not\subset Z_i$ for $i=1,2$.  However, this means that
  $U\in\Gr^A_{\dimv X}(Z)$  is \emph{not} a fixpoint under the 
  $\CC^*$-action on  $\Gr^A_{\dimv X}(Z)$, which is induced by the family of automorphisms of $Z$ which is given by 
  $(z_1,z_2)\mapsto (z_1,t z_2)$.  Thus,
  $\chi_{\cO(X)}*\chi_{\cO(Y)}(Z)=0$ by the argument in the proof
  of~\cite[Prp.~3.2]{DWZ2}.
\end{proof}

\subsection{Haupt's formula for string algebras} \label{ssec:Haupt}
The following result is a special case of~\cite[Cor.~3.17]{Haupt} adapted to
our notations.

\begin{Thm}[Haupt] \label{thm:Haupt}
  Let $I\subset \CC Q$ an admissible ideal such, that $H:=\CC Q/I$ is a string
  algebra. Let $\bv,\bw,\bx\in\St(H)$ and $(\bb,*)\in\pBa(H)$.  Then in
  the algebra of constructible functions $\cF$ the following holds:
  \begin{align*}
     (\chi_\bv *\chi_\bw)(M_\bx) &=
    \sum_{\substack{G\in\Mor(F_\bv, F_\bx)\\ H\in\Mor(F_\bw, F_\bx)}}
    (\chi_{G^\lam(\II_{S(\bv)})}* \chi_{H^\lam(\II_{S(\bw)})})(\II_{S(\bx)})\\
    (\chi_\bv *\chi_\bw)(M_{(\bb,t,m)}) &=
    \sum_{\substack{G\in\Mor(F_\bv, F_{(\bb,*)})\\ H\in\Mor(F_\bw, F_{(\bb,*)})}}
    (\chi_{G^\lam(\II_{S(\bv)})}* \chi_{H^\lam(\II_{S(\bw)})})(R_{S(\bb,*)}^{(t,m)})
  \end{align*}
\end{Thm}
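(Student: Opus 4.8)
The plan is to deduce Theorem~\ref{thm:Haupt} from \cite[Cor.~3.17]{Haupt}, whose argument I would reconstruct as follows. Since $\chi_\bv=\chi_{\cO(M_\bv)}$ and $\chi_\bw=\chi_{\cO(M_\bw)}$ are characteristic functions of $G$-orbits, for any $H$-module $X$ the function $Y\mapsto\chi_\bv(Y)\chi_\bw(X/Y)$ on the quiver Grassmannian $\Gr^H_{\dimv M_\bv}(X)$ takes only the values $0$ and $1$, so that
\[
(\chi_\bv*\chi_\bw)(X)=\chi\bigl(\{\,Y\le X\ \mid\ Y\cong M_\bv,\ X/Y\cong M_\bw\,\}\bigr).
\]
First I would recall from \cite{Krause} that the push-forward $F_\bx^\lam\df\CC S(\bx)\lmd\ra H\lmd$ along the admissible winding $F_\bx$ is exact, and that $M_\bx=F_\bx^\lam(\II_{S(\bx)})$ carries the distinguished basis indexed by the vertices of $S(\bx)$ in which every arrow of $Q(H)$ acts by a partial permutation matrix --- this is exactly what the winding axioms (s), (t) guarantee. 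Declaring the arrow $\gamma$ of $Q(H)$ to have degree the coordinate vector $e_\gamma\in\ZZ^{Q_1}$ turns $H$ into a graded algebra (the relations being monomial) and equips $M_\bx$ with a $\ZZ^{Q_1}$-grading, the basis vector at a vertex $j$ of $S(\bx)$ being homogeneous of degree the image in $\ZZ^{Q_1}$ of the walk in $Q(H)$ traced by $F_\bx$ from a fixed base vertex to $j$; the torus $T=(\CC^*)^{Q_1}$ then acts on $\Gr^H_{\dimv M_\bv}(M_\bx)$.

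The key step is a torus localization. A ``twisting'' computation shows that for a submodule $Y\le M_\bx$ the module $t\cdot Y$ has, in a suitable basis, the same structure maps as $Y$ up to rescaling of the nonzero arrow actions, and likewise $M_\bx/(t\cdot Y)$ versus $M_\bx/Y$; since $\bv$ and $\bw$ are strings, such a rescaling does not change the isomorphism type of the string modules $M_\bv$ and $M_\bw$, so the constructible locus in the displayed identity is $T$-stable. By the localization principle for topological Euler characteristics I would then replace $\Gr^H_{\dimv M_\bv}(M_\bx)$ by its $T$-fixed locus. A submodule of $M_\bx$ is $T$-fixed precisely when it is spanned by a subset $B$ of the distinguished basis; any such $B$ is the support of a subrepresentation $\tilde Y$ of $\II_{S(\bx)}$ over the hereditary algebra $\CC S(\bx)$, and by exactness of $F_\bx^\lam$ one has $M_\bx/F_\bx^\lam(\tilde Y)\cong F_\bx^\lam(\II_{S(\bx)}/\tilde Y)$. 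Decomposing the $T$-fixed locus according to which morphisms of windings $G\in\Mor(F_\bv,F_\bx)$ and $H\in\Mor(F_\bw,F_\bx)$ are realised by the connected components of the induced subgraphs of $S(\bx)$ on $B$ and on its complement --- and accounting for the residual freedom over those vertices where the weighting fails to separate the fibre of $F_\bx$ --- would group the Euler characteristic into the summands $(\chi_{G^\lam(\II_{S(\bv)})}*\chi_{H^\lam(\II_{S(\bw)})})(\II_{S(\bx)})$, giving the first asserted identity. For $M_{(\bb,t,m)}$ I would run the same argument with $S(\bx)$ replaced by the cyclic quiver $S(\bb,*)$ of type $\tsfA$ and $\II_{S(\bx)}$ by $R_{S(\bb,*)}^{(t,m)}$; here the weighting must be compatible with going once around $S(\bb,*)$, which is where primitivity of $(\bb,*)$ (equivalently, admissibility of $F_{(\bb,*)}$) enters, and the residual Grassmannians are now genuinely those of the tame hereditary algebra $\CC S(\bb,*)$, producing the convolution products $(\chi_{G^\lam(\II_{S(\bv)})}*\chi_{H^\lam(\II_{S(\bw)})})(R_{S(\bb,*)}^{(t,m)})$ of the second identity.

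I expect the main obstacle to be the combinatorial bookkeeping behind the second paragraph, and this is why in the end I would invoke \cite[Sec.~3]{Haupt} rather than reprove it: one must verify that every $T$-fixed submodule factors through the winding in a way parametrised \emph{without overcounting} by $\Mor(F_\bv,F_\bx)\times\Mor(F_\bw,F_\bx)$ --- which is exactly where the admissibility conditions (P), (R) and the isomorphisms $F_\bw\cong F_{\bw^{-1}}$, $F_{(\bb,*)}\cong F_{(\bb^{(j)},*)}\cong F_{(\bb^{-1},*)}$ come into play --- and that the exact functor $F_\bx^\lam$ really matches the quotients on the two sides. Granting this, the theorem reduces to checking that the admissible windings $F_\bw$, $F_{(\bb,*)}$, the push-forwards $F^\lam$, and the string and band modules $M_\bw$, $M_{(\bb,t,m)}$ from Sections~\ref{ssec:pre-string}--\ref{ssec:locfree} coincide with the corresponding objects of \cite{Haupt}, which is immediate from the definitions.
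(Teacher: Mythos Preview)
Your proposal is correct and matches the paper's approach: the paper does not prove this theorem at all but simply cites it as a special case of \cite[Cor.~3.17]{Haupt}, and you likewise reduce to that reference after sketching the torus-localization argument behind it. The additional outline you give of Haupt's method is accurate and helpful, but strictly speaking goes beyond what the paper itself does.
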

Note, that for example $G^\lam(\II_{S(\bv)})$ is a string module for the
hereditary string algebra $\CC S(\bx)$ resp.~$\CC S(\bb,*)$.

\section{Rank vectors and roots}
\subsection{Basic locally free strings}\label{ssec:prplf}
In view of the definition of $\Ome$ it will be convenient to introduce  a
function
$\ome\df\{1,2,\ldots,n\} \ra\{-1,+1\}$ with $\ome(j)=+1$ iff
$(i_j,i'_j)=(j-1, j)$ i.e. if the arrow $\eta_j$ points to the left. Otherwise,
$\ome(j)=-1$.  Thus,
\begin{equation}
  \eta_{ij}:=\eta_{i+1}^{\ome(i+1)}\eta_{i+2}^{\ome(i+2)}\cdots\eta_j^{\ome(j)}\in\St(H)
\quad\text{with} (t(\eta_{ij}),s(\eta_{ij}))=(i,j)  
\end{equation}
for $0\leq i\leq j\leq n$, where we agree that $\eta_{ii}=\II_i$.
We also abbreviate $\eta:=\eta_{0,n}$.

For example, $\bb:=\eta\vep_n\eta^{-1}\vep_0\in\pBa(H)$  and
$\eta_{i,n}\vep_n\eta^{-1}\vep_0\eta_{0,i}$ is a rotation of $\bb$.

For each $i\in\{0,1,\ldots,n\}$ let $\bp_i^{(l)}$ be the longest direct
string with $s(\bp_i^{(l)})=i$, and $\bp_i^{(r)}$ the longest inverse string
with  $t(\bp_i^{(r)})=i$. Then $\bp_i:=\bp_i^{(l)}\bp_i^{(r)}\in\Stlf(H)$
and
$M_{\bp_i}\cong He_i  =: P_i$ is an indecomposable projective $H$-module
 with simple top $S_i$.  

Dually, for each $i\in\{0,1,\ldots,n\}$ let $\bq_i^{(l)}$ be the longest
inverse  string with $s(\bq_i^{(l)})=i$, and $\bq_i^{(r)}$ the longest direct
string with $t(\bq_i^{(r)})=i$. Then
$\bq_i:=\bq_i^{(l)}\bq_i^{(r)}\in\Stlf(H)$ and
$M_{\bq_i}\cong\Hom_\CC(e_iH, \CC) =: J_i$ is an indecomposable
injective $H$-module  with simple socle $S_i$.   
Moreover, we abbreviate for each $i\in I$
\begin{equation} \label{eqn:e_i}
\be_i:=\begin{cases}
\vep_0 &\text{if } i=0,\\
\II_i  &\text{if } 1\leq i\leq n-1,\\
\vep_n^{-1} &\text{if } i=n.
\end{cases}
\end{equation}
Thus, also $\be_i\in\Stlf(H)$ for all $i\in I$ and the modules $E_i:=M_{\be_i}$
are the generalized simple modules in the sense of~\cite[Sec.~3.2]{GLS1}.
Note, the if $i\in I$ is a  sink of $Q^o(\tsfC_n,\Ome)$ then $\bp_i=\be_i$, and
$\bq_i=\be_i$ if $i$ is a source of $Q^o(\tsfC_n,\Ome)$. 

For each $i\in I':=\{1,2,\ldots, n\}$ let
$\br_i$ be the longest inverse string such that the \emph{hook} 
$\eta_i\br_i$ is a string.  Note that some of the $\br_i$ may be of the
form $\II^{-1}_j$.
Similarly, let
$\br'_i$ be the longest direct string such that the \emph{co-hook}
$\eta_i^{-1}\br'_i$ is a string.  It is  easy to see, that
in our situation, there exists a $n$-cyclic 
permutation $\tau$ of $I'$  
such that $(\br'_i)^{-1}= \br_{\tau(i)}$ for all $i\in I'$, see for
example~\cite[Lemma 4.5.10]{Ricke} or~\cite[Prp.~3.8]{HLS21}.
In particular, $\br_i\eta_{\tau^{-1}(i)}\br_{\tau^{-1}(i)}$ is a string for all
$i\in I'$.
Similar strings and (co-) hooks are defined for the arrows $\vep_0$ and
$\vep_n$. However, these are not needed here, since our focus is on locally
free modules.
Using the maximality properties, it is straight forward to check that
\[
\{\bp_i\mid i\in I\} \cup \{\bq_i\mid i\in I\} \cup \{\br_i\mid i\in I'\}
\subset\Stlf(H).
\]
\begin{Expl} \label{expl:ct5}
Let $n=5$ and $\Ome=\{(0,1), (2,1), (3,2), (4,3), (4,5)\}$.  Thus we have:
\[
Q(\tsfC_n,\Ome):\qquad \xymatrix{
  0 \ar@(ul,dl)_{\vep_0} & \ar[l]_{\eta_1} 1\ar[r]^{\eta_2}  & 2\ar[r]^{\eta_3}
  &3\ar[r]^{\eta_4} &4& \ar[l]_{\eta_5} 5 
  \ar@(ur,dr)^{\vep_{5}}
}
\]
We find here:
\begin{align*}
  \bp_0 &=\vep_0\II_1^{-1}=\vep_0  &\bq_0 &=\eta_1^{-1}\vep_0^{-1}\eta_1 &
  \br_1 &=\eta_2^{-1}\eta_3^{-1}\eta_4^{-1} &\br'_1 &=\br_2^{-1}\\
  \bp_1 &=\vep_0\eta_1\eta_2^{-2}\eta_3^{-1}\eta_4^{-1} & \bq _1&=\II_1 &
  \br_2 &=\eta_1^{-1}\vep_0^{-1}  &\br'_2 &=\br_3^{-1}\\
  \bp_2 &=\II_2\eta_3^{-1}\eta_4^{-1}=\eta_3^{-1}\eta_4^{-1} &
  \bq_2 &=\eta_2^{-1}\II_2=\eta_2^{-1} &\br_3 &=\II_2^{-1} &\br'_3 &=\br_4^{-1}\\
  \bp_3 &=\eta_4^{-1}  &\bq_3 &=\eta_2^{-1}\eta_3^{-1} &\br_4 &=\II_3^{-1}&
  \br'_4&=\br_5^{-1}\\
  \bp_4 &=\II_4 & \bq_4 &=\eta_2^{-1}\eta_3^{-1}\eta_4^{-1}\eta_5\vep_5 &
  \br_5 &=\vep_5^{-1}\eta_5^{-1} &\br'_5&=\br_1^{-1}\\
  \bp_5 &=\eta_5\vep_5^{-1}\eta_5^{-1} &\bq_5 &=\vep_5.
\end{align*}
We see from the last column, that in this case the cyclic permutation
$\tau$ is given by
\[
  1 \mapsto 2 \mapsto 3 \mapsto 4 \mapsto 5 \mapsto 1
\]  
\end{Expl}

\subsection{Operations on locally free strings}
For each $\bw\in\Stlf(H)$ there exists a unique pair 
$(s'(\bw),s''(\bw))\in I'\times\{-1,1\}$ such that $\bw\eta_{s'(w)}^{s''(w)}$
is a string. Thus, $s''(\bw)=\ome(s'(\bw))$ with our notation from
Section~\ref{ssec:pre-string}.
We define moreover $(t'(\bw), t''(\bw)):=(s'(\bw^{-1}), s''(\bw^{-1}))$.

Suppose $s''(\bw)=+1$, then, in view of the definition, also
$\bw[1]:=\bw\eta_{s'(\bw)}\br_{s'(\bw)}$ is also a locally free string, and
we have $(s'(\bw[1]), s''(\bw[1]))=(\tau^{-1}(s'(\bw)),+1)$. Thus, 
we can define
recursively $\bw[n+1]:= (\bw[n])[1]$ for all $n\in\NN$.
If $s''(w^{-1})=+1$ we define $[n]w:= (w^{-1}[n])^{-1}$.
Similarly, if $s''(\bw)=-1$ the locally free string
$\bw[-1]:=\bw\eta_{s'(\bw)}^{-1}\br'_{s'(\bw)}$ is defined and we have
$(s'(\bw[-1]),s''(\bw[-1]))= (\tau(s'(\bw)),-1)$.
Thus, we can define in this situation
$\bw[-n]$ for all $n\in\NN$, as well as we can define $[-n]\bw$ in case $s''(\bw^{-1})=-1$.

The following result follows easily from the description of the prinjective
component of the Auslander-Reiten quiver of $H(\tsfC_n,D,\Ome)$
in~\cite[Sec.~4.5]{Ricke}.

\begin{Lem} \label{lem:proj}
  For each $i\in I$, either the vertex $t(\bp_i)$ is a sink of $Q^o$ and 
  $\bp_i=\be_{t(\bp_i)}[j]$ for some $j\in\NN$, or else $s(\bp_i)$ is a sink of
  $Q^o$ and $\bp_i=[j]\be_{s(\bp_i)}$ for some $j\in\NN$. 

  Similarly, either 
  $t(\bq_i)$ is a source in $Q^o$, and $\bq_i=\be_{t(\bq_i)}[-k]$ for some
  $k\in\NN$, or else $s(\bq_i)$ is a source of $Q^o$ and
  $\bq_i=[-k]\be_{s(\bq_i)}$ for some $k\in\NN$. 
\end{Lem}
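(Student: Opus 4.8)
The statement is essentially a bookkeeping claim about how the "basic" locally free strings $\bp_i$ and $\bq_i$ sit inside the preprojective and preinjective parts of the Auslander–Reiten quiver. The plan is to extract it directly from the combinatorics of the prinjective component described in~\cite[Sec.~4.5]{Ricke}. The key observation is that an indecomposable projective $P_i = M_{\bp_i}$ is preprojective, hence lies on a $\tau_H$-orbit which, going backwards, must terminate at a projective — and it IS that projective; going forwards along the hook/co-hook combinatorics it reaches a \emph{generalized simple} $E_j = M_{\be_j}$ for exactly one $j \in I$, namely a sink of $Q^o$. So the real content is: starting from $\bp_i$ and repeatedly applying the "remove a hook" operation (the inverse of $\bw \mapsto \bw[1]$), one lands after finitely many steps at $\be_j$ for $j$ a sink of $Q^o$; and dually for $\bq_i$ with sources, using the co-hook operation $\bw\mapsto\bw[-1]$.

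First I would record the two combinatorial facts I need about the operations from the previous subsection. (i) If $\bw\in\Stlf(H)$ and $\bw$ is \emph{not} of the form $\be_j$ for a sink $j$ of $Q^o$, then $\bw$ can be written as $\bw'[1]$ for a strictly shorter locally free string $\bw'$, \emph{or} as $[1]\bw'$ similarly — this is because the maximality in the definition of $\bp_i$ (longest direct string out of $i$, longest inverse string into $i$) forces the string to "end in a completed hook" unless it already is $\be_j = \II_j$ for a sink. Indeed, $\be_i = \bp_i$ precisely when $i$ is a sink of $Q^o$, by the remark just before the lemma. (ii) The operation $\bw\mapsto\bw[1]$ corresponds on modules to a step in the AR quiver away from the projectives (this is exactly what the description in~\cite[Sec.~4.5]{Ricke} encodes; it matches the hook/co-hook description of AR sequences for string algebras in~\cite{BR}). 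Combining, an induction on the length $l(\bp_i)$ gives the claim: if $\bp_i = \be_{t(\bp_i)}$ we are in the base case ($j=0$); otherwise write $\bp_i = \bw'[1]$ or $[1]\bw'$, note $\bw'$ is again a "basic projective string" in the appropriate sense (it has the same maximality property at its relevant endpoint, being obtained by stripping a hook), apply induction, and re-append the hook to conclude $\bp_i = \be_{t(\bp_i)}[j]$ or $[j]\be_{s(\bp_i)}$.

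For the injective half I would run the mirror-image argument: the co-hook operation $\bw\mapsto\bw[-1]$ moves towards the injectives, $\be_i=\bq_i$ exactly when $i$ is a source of $Q^o$, and $\bq_i$, unless already equal to such an $\be_i$, strips a completed co-hook to a shorter $\bq$-type string; induction on length then yields $\bq_i = \be_{t(\bq_i)}[-k]$ or $[-k]\be_{s(\bq_i)}$ with the endpoint a source. Alternatively — and this is perhaps cleaner — one can deduce the injective statement from the projective one by applying the duality $D = \Hom_\CC(-,\CC)$, which sends $P_i$ of $H$ to $J_i$ of $H^{\op}$, interchanges sinks and sources of $Q^o$, and intertwines $[1]$ with $[-1]$; but since $H^{\op}$ is of the same shape as $H$ (just the orientation $\Ome$ reversed), this gives exactly the second assertion for $H$ with $\Ome$ replaced by its opposite, i.e. the general case.

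The main obstacle is item (i) — verifying precisely that a basic projective string which is not already a generalized simple can \emph{always} be realized as $\bw'[1]$ or $[1]\bw'$ with $\bw'$ still of "basic projective type", i.e. that the recursion is well-founded and stays inside the right class of strings. This is where one genuinely has to look at the structure of $\bp_i = \bp_i^{(l)}\bp_i^{(r)}$: the string consists of a maximal direct path followed by a maximal inverse path, and one must check that peeling off the outermost hook $\eta_{s'(\bw)}\br_{s'(\bw)}$ (or co-hook on the other side) leaves a string of the same shape — a maximal direct path followed by a maximal inverse path through the appropriate vertex — so the inductive hypothesis applies. Granting the cited description of the prinjective AR component in~\cite[Sec.~4.5]{Ricke}, and the relation $(\br'_i)^{-1}=\br_{\tau(i)}$ recorded above which guarantees hooks and co-hooks match up consistently, this reduces to a short finite check that I would carry out by tracking the pair $(s'(\bw),s''(\bw))$ under the operations, exactly as in the definitions preceding the lemma.
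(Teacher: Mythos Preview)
Your proposal is correct and takes essentially the same approach as the paper: both defer to the description of the prinjective component in~\cite[Sec.~4.5]{Ricke}, with your write-up simply unpacking in more detail (via the hook-peeling induction) what the paper leaves as a one-line citation. You correctly identify the only nontrivial point---that the recursion stays within the right class of strings---and, like the paper, ultimately ground its resolution in Ricke's structural description.
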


\begin{Expl}\label{expl:ct5-2}
  Continuing with Example~\ref{expl:ct5}, we see that in this situation we
  find
  \begin{align*}
    \bp_0 &=\be_0=\vep_0,    &\bp_1 &=\be_0[1]=\vep_0\eta_1\br_1, &
    \bp_2 &=[2]\be_4=\II_2\eta_{3}^{-1}\II_3\eta_4^{-1}\II_4,\\
    \bp_3 &=[1]\be_4\II_3\eta_4^{-1}\II_4, &
    \bp_4 &=\be_4, &
    \bp_5 &=\be_4[1]=\II_1\eta_5\br_5.
  \end{align*}
Also, we find
  \[
    \br_1[4]=\br_1\eta_5\br_5\eta_4\br_4\eta_3\br_3\eta_2\br_2=
    (\eta_2^{-1}\eta_3^{-1}\eta_4^{-1})\eta_5(\vep_5^{-1}\eta_5^{-1})
    \eta_4\cdot\eta_3\cdot\eta_2(\eta_1^{-1}\vep_0^{-1}),
  \]
and thus, $\rks(\br_1[4])=\rho=(1,2,2,2,1)$.  
\end{Expl}

This motivates the following:

\begin{Def} \label{def:properties}
We say that a locally free string $\bw\in\Stlf(H)$ is
\begin{itemize}
\item  \emph{weakly preprojective} if $s''(\bw)=+1=t''(\bw)$,
\item  \emph{weakly regular} if $s''(\bw)\neq t''(\bw)$,  
\item  \emph{weakly isotropic} if it is weakly regular with $s'(\bw)=t'(\bw)$
\item  \emph{weakly preinjective} if $s''(\bw)=-1=t''(\bw)$.  
\end{itemize}
On the other hand, the locally free strings of the form
\begin{itemize}
\item
  $([k]\bp_i[k])^{\pm 1}$ for $(i,k)\in I\times\NN$ are called
  \emph{preprojective},
\item
  $(\br_i[k])^{\pm 1}$ for $(i,k)\in I'\times\NN$ are called
  \emph{regular},
\item
  $(\br_i[(k+1)n - 1])^{\pm 1}$ for $(i,k)\in I\times\NN$ are called
  \emph{isotropic}
\item
  $([-k]\bq_i[-k])^{\pm 1}$ for $(i,k)\in I\times\NN$ are called
  \emph{preinjective}.
\end{itemize}
The set $\Stlf^\tau(H)$ of $\tau$-\emph{locally free strings} consists,
by definition, of the  union of the preprojective, regular and preinjective
strings.
\end{Def}

\begin{Rem} \label{rem:tlf}
  (1) With the help of~\eqref{eq:defect} it is easy to see that a locally
  free string $\bw$ is weakly preprojective if and only if 
  $d_\Ome(\rks(\bw))>0$.  Similarly,  it is weakly regular if and only if
  $d_\Ome(\rks(\bw))=0$.  Else, $\bw$ is weakly preinjective.

  (2) Clearly, the ``projective'' strings $\bp_i$ for $i\in I$ are
  weakly preprojective, the ``simple regular'' strings $\br_i$ for $i\in I'$
  are weakly regular, and the ``injective'' strings $\bq_i$ for $i\in I$ are
  weakly preinjective.  On the other hand, if  $\bp$ is weakly
  projective, then also $\bp[1]$ and $[1]\bp$ are weakly projective.
  Thus, by induction, all preprojective strings are weakly preprojective,
  all regular strings are weakly regular, and all preinjective strings
  are weakly preinjective.  For the claim about isotropic strings, we
  observe that $t''(\br_i[k])=t''(\br_i)=+1$ and 
  $s''(\br_i[k])=s''(\br_{\tau^{-k-1}(i)})=-1$.  

  (3) Suppose that $\bp\in\Stlf(H)$ is weakly preprojective, then it follows
  from~\cite[p.~172]{BR} that $M_\bp$ is the Auslander-Reiten translate
  of $M_{[1]\bp[1]}$.  Thus, $\rks([1]\bp[1])=c_H^{-1}\cdot\rks(\bp)$, where
  $c_H$ is the Coxeter transformation from Proposition~\ref{Prp:Cox}.

  Similarly, if $\bq$ is weakly preinjective,
  $M_{[-1]\bq[-1]}$ is the Auslander-Reiten translate of $M_\bq$, and we have
  $\rks([-1]\bq[-1])=c_H\rks(\bq)$.

  Moreover,
  $M_{\br_{\tau(i)}}$ is the Auslander-Reiten translate of $M_{\br_i}$ for all
  $i\in I'$.
  
  (4) As discussed in~\cite[Chapter~4]{Ricke} and~\cite{HLS21} 
  a string module $M_\bw$ is $\tau$-locally free if and
  only if $\bw\in\Stlf^\tau(H)$.
\end{Rem}

\begin{Lem} \label{lem:pq-unique}
  The family
\[
(\rks([k]\bp_i[k])_{(i,k)\in I\times\NN} \cup
  (\rks[-k]\bq_i[-k])_{(i,k)\in I\times\NN}
    \]
    of vectors in $\NN^I$ consists of pairwise different elements (which are in
    fact real roots).
\end{Lem}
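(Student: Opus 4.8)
The plan is to reduce the whole statement to the Coxeter transformation $c_H$ and the defect $d_\Ome$. First I would record the two identities
\[
  \rks([k]\bp_i[k]) = c_H^{-k}\cdot\rkv(P_i), \qquad
  \rks([-k]\bq_i[-k]) = c_H^{k}\cdot\rkv(J_i) \qquad\big((i,k)\in I\times\NN\big),
\]
obtained by iterating Remark~\ref{rem:tlf}(3) — legitimate at each step because, by Remark~\ref{rem:tlf}(2), all the strings $[j]\bp_i[j]$ remain weakly preprojective and all the $[-j]\bq_i[-j]$ remain weakly preinjective — and using $\rkv(P_i)=\rks(\bp_i)$, $\rkv(J_i)=\rks(\bq_i)$. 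Since each $[k]\bp_i[k]$ and $[-k]\bq_i[-k]$ is an honest locally free string, each of these vectors is the rank vector of a nonzero indecomposable $\tau$-locally free module; in particular it lies in $\NN^I$, and by the identification of such rank vectors with $\Del^+(\tsfC_n)$ it is a positive root. By Remark~\ref{rem:tlf}(1)--(2) it has non-zero defect, hence it is not a multiple of the isotropic root $\rho$, so it is a \emph{real} root.

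Next I would separate the two sub-families by the sign of the defect: Remark~\ref{rem:tlf}(1)--(2) gives $d_\Ome(\rks([k]\bp_i[k]))>0$ and $d_\Ome(\rks([-l]\bq_j[-l]))<0$ for all $i,j,k,l$, so no member of the ``$\bp$-part'' equals a member of the ``$\bq$-part''.

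It then remains to prove that $(i,k)\mapsto\rks([k]\bp_i[k])$ is injective; the injectivity of $(i,k)\mapsto\rks([-k]\bq_i[-k])$ is symmetric under $P\leftrightarrow J$, $c_H\leftrightarrow c_H^{-1}$. Assume $\rks([k]\bp_i[k])=\rks([l]\bp_j[l])$ with $k\ge l$, and set $m:=k-l$. Applying $c_H^{k}$ to $c_H^{-k}\rkv(P_i)=c_H^{-l}\rkv(P_j)$ yields $\rkv(P_i)=c_H^{m}\cdot\rkv(P_j)$. If $m\ge1$ I would rewrite the right-hand side by first using $c_H\cdot\rkv(P_j)=-\rkv(J_j)$ from Proposition~\ref{Prp:Cox}(a) and then moving the remaining $c_H$-powers through Remark~\ref{rem:tlf}(3) in the ``injective'' direction:
\[
  c_H^{m}\cdot\rkv(P_j)\;=\;-\,c_H^{m-1}\cdot\rkv(J_j)\;=\;-\,\rks\big([-(m-1)]\bq_j[-(m-1)]\big),
\]
which is the negative of a vector in $\NN^I$ and hence has a strictly negative entry — contradicting $\rkv(P_i)=\rks(\bp_i)\in\NN^I$. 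Thus $m=0$, so $k=l$ and $\rkv(P_i)=\rkv(P_j)$. Finally, Proposition~\ref{prp:ring} together with $\Ext^1_H(P_i,E_l)=0$ (as $P_i$ is projective) gives $\bil{\rkv(P_i),\alp_l}_H=\dim_\CC\Hom_H(P_i,E_l)=\dim_\CC E_l(i)$, which is non-zero exactly when $l=i$; hence $\rkv(P_i)$ determines $i$, and $\rkv(P_i)=\rkv(P_j)$ forces $i=j$.

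I do not anticipate a real obstacle: once Remark~\ref{rem:tlf}(3) provides $\rks([k]\bp_i[k])=c_H^{-k}\rkv(P_i)$ and its dual, the argument is essentially bookkeeping with $c_H$, $c_H^{-1}$ and the defect. The one step needing care is the reduction $\rkv(P_i)=c_H^{m}\rkv(P_j)$: one must apply $c_H$ to $\rkv(P_j)$ exactly once — landing on $-\rkv(J_j)$ — and thereafter move with $c_H$ only in the direction $\bq_j\mapsto[-1]\bq_j[-1]$ along genuine locally free strings, so that the right-hand side visibly acquires a negative entry; the dual statement needs the mirror-image care, moving along $\bp_j\mapsto[1]\bp_j[1]$.
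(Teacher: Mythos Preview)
Your argument is correct and begins exactly as the paper does: both reduce via Remark~\ref{rem:tlf}(3) to the identities $\rks([k]\bp_i[k])=c_H^{-k}\rkv(P_i)$ and $\rks([-k]\bq_i[-k])=c_H^{k}\rkv(J_i)$. From there the paper simply invokes \cite[Lemma~2.1]{GLS1} (using that $\tsfC_n$ is not of finite type), whereas you supply a self-contained proof of that lemma in this situation --- separating the two families by the sign of $d_\Ome$, and then ruling out $c_H^{-k}\rkv(P_i)=c_H^{-l}\rkv(P_j)$ with $k>l$ via $c_H\rkv(P_j)=-\rkv(J_j)$ and positivity. So the route is the same; you have unpacked the cited reference rather than taken a different path. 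One small remark: your justification that the vectors are \emph{real} roots appeals to ``the identification of such rank vectors with $\Del^+(\tsfC_n)$'', which in the paper's logical order is only established later; this is harmless here since the real-root claim is parenthetical and the paper itself defers it to the external citation.
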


\begin{proof}  
  In view of Remark~\ref{rem:tlf}~(3) we have
$(\rks([k]\bp_i[k]))_{(i,k)\in I\times\NN}= (c_H^{-k}\rkv(P_i))_{(i,k)\in I\times\NN}$
and
$(\rks([-k]\bq_i[-k]))_{(i,k)\in I\times\NN}= (c_H^k\rkv(J_i))_{(i,k)\in I\times\NN}$.
Thus, our claim follows for example from~\cite[Lemma 2.1]{GLS1} since $\tsfC_n$
is not of finite type.
\end{proof}

\subsection{Equivalence classes of locally free strings}
\label{ssec:equivlf}
We introduce a second set of letters
\[
  Q_1^*:= (Q_1^{\pm}\setminus\{\vep_0^{\pm 1}, \vep_n^{\pm 1}\})
  \cup\{\vep_0^*, \vep_n^*\}
\]
with
$s(\vep_i^*)=i= t(\vep^*_i)$ and $(\vep_i^*)^{-1} =\vep_i^*$ for $i\in\{0,n\}$.
We can form with these letters a set $\St^*$ of strings with the same rules
as above.  The canonical projection $p\df Q_0^\pm\ra Q_0^*$ with
$p(\eta_i)=\eta_i$, $p(\eta_i^{-1})=\eta_i^{-1}i$ and $p(\vep_i^{\pm 1})=\eta_i^*$
induces a surjective map $p\df\St(H)\ra\St^*(H)$.  We say then, that two
strings $\bv, \bw\in\St(H)$ are \emph{similar} if $p(\bv)=p(\bw)$, in symbols
$\bv\in[\bw]:=p^{-1}(p(\bw))$.
The notions  ``locally free'', ``weakly preprojective'',
``weakly regular'' and ``weakly preinjective''  are stable under this
equivalence relation.

We also introduce bands $\Ba^*(H)$ and primitive bands $\pBa^*(H)$ with the
obvious adaptions from $\Ba(H)$ resp.~$\pBa(H)$.  Note, however, that
$\pBa^*(H)$ contains up to rotation a unique band
$\tilde{\bb}:=\vep_0^*\eta\vep_n^* \eta^{-1}$,
which consist of exactly $2n+2$ letters.  We have now a projection
$\tilde{p}\df\pBa(H)\ra\Ba^*(H)$ with $p(\bb',*)=((\tilde{b}^{(i)})^{h(\bb')},*)$. 

Note that $\rks(\bw')=\rks(\bw)$ for all $\bw'\in [\bw]$.  

\subsection{Rank vectors of locally free strings are roots}
\label{ssec:rklf}
Recall, that we defined in Section~\ref{ssec:locfree} the rank function
$\rks\df\Stlf(H)\ra\NN^I$ for locally free strings.
Following~\ref{ssec:roots-tCn} we
identify the positive roots $\Del^+(\tsfC_n)$ with a subset of $\NN^I$.
With the notation from equation~\eqref{eqn:e_i} we have for example
$\rks(\be_i)=\alp_i$ for all $i\in I$.

\begin{Thm} \label{thm:rk-root}
Consider the string algebra $H=H_\CC(\tsfC_n, D,\Ome)$, where $D$ is the minimal symmetrizer. Then we have the following:
  \begin{itemize}
  \item[(a)]  The image of $\rks$ is the positive part $\Del^+(\tsfC_n)$ of
    the corresponding affine root system of type $\tsfC_n$.
  \item[(b)] For each $\bw\in\Stlf(H)$ with $\rks(\bw)\in\Del^+_\real(\tsfC_n)$
    we have
    \[
    \rks^{-1}(\rks(\bw))=[\bw] \cup [\bw^{-1}].
     \]
     Moreover, we have in this situation $[\bw]=[\bw^{-1}]$ if and only if
     $\rks(\bw)$ is a long root.
   \item[(c)]
     For each $\bw\in\Stlf(H)$ with
     $\rks(\bw)\in\Del^+_\imag(\tsfC_n)=\ZZp\rho$ we have
     \[
     \rks^{-1}(\rks(\bw))=\coprod_{i\in I'} ([\br_i[nk-1]]\ \textstyle{\coprod}\  [\br_i[nk-1]^{-1}])
     \]
     for some $k\in\ZZp$. 
  \end{itemize}
\end{Thm}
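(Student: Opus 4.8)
The plan is to work directly with the combinatorics of locally free strings and bands, using the $\tau$-orbit structure recorded in Section~\ref{ssec:prplf} and Remark~\ref{rem:tlf}. For part (a) I would argue that the image of $\rks$ is contained in $\Del^+(\tsfC_n)$ and conversely that every positive root is hit. The containment for $\tau$-locally free strings is immediate from Remark~\ref{rem:tlf}(3): the preprojective strings $[k]\bp_i[k]$ have rank vector $c_H^{-k}\rkv(P_i)$, the preinjective strings $[-k]\bq_i[-k]$ have rank vector $c_H^{k}\rkv(J_i)$, and the regular strings $\br_i[m]$ are obtained from the $\br_i$ by iterated application of $c_H$ as well; since $c_H$ is the Coxeter transformation and $\rkv(P_i),\rkv(J_i)$ are real roots, all of these land in $\Del^+_\real(\tsfC_n)$, while the isotropic strings $\br_i[(k+1)n-1]$ give $k\rho$ by the computation illustrated in Example~\ref{expl:ct5-2} (one checks $\rks(\br_i[n-1])=\rho$ directly from the fact that $\tau$ is an $n$-cycle on $I'$ and the hooks $\eta_j\br_j$ sweep out each vertex the right number of times, then iterates). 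For the remaining, non-$\tau$-locally-free strings one must check that $\rks$ still produces roots; here I would use that such a string decomposes, up to the operations $[\,\cdot\,]$, into a ``string part'' governed by a finite type $\sfA$ or $\sfC$ sub-pattern — this is exactly why the winding $F_\bw$ has underlying graph of type $\sfA$ — so its rank vector is a non-negative integer combination that one identifies with one of the listed forms $k\rho\pm\alp_{ij}$ or $k\rho\pm\bet_{ij}$. Surjectivity then follows by exhibiting, for each of the four families in the description of $\Del^+_\real(\tsfC_n)$ from Section~\ref{ssec:roots-tCn}, an explicit (weakly preprojective / regular / preinjective) string realizing it, plus the $\br_i[nk-1]$ for the imaginary roots.

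For part (b), fix $\bw$ with $\beta:=\rks(\bw)\in\Del^+_\real(\tsfC_n)$. The inclusion $[\bw]\cup[\bw^{-1}]\subseteq\rks^{-1}(\beta)$ is clear since similar strings have equal rank vectors. For the reverse inclusion I would argue as follows: $\beta$ real forces $M_\bw$ (and any $M_{\bv}$ with $\rks(\bv)=\beta$) to be $\tau$-locally free — a band module or a non-$\tau$-locally-free string module cannot have a real rank vector, because band rank vectors are multiples of $\rho$ (Section~\ref{ssec:locfree}) and, by Lemma~\ref{lem:pq-unique} together with the regular-string count, the only $\tau$-locally-free string rank vectors that are real are realized by the listed $[k]\bp_i[k]$, $\br_i[m]$, $[-k]\bq_i[-k]$; so $\bv$ lies in $\Stlf^\tau(H)$. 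Now within $\Stlf^\tau(H)$ the rank vector determines the string up to the $[\,\cdot\,]$-operation and inversion: the defect $d_\Ome(\beta)$ selects the class (preprojective / regular / preinjective) by Remark~\ref{rem:tlf}(1), the value of $c_H^{\pm}$-iteration pins down $k$ (again Lemma~\ref{lem:pq-unique} and its regular analogue), and what remains is the ambiguity in the ``base'' string, which is precisely the similarity class plus its inverse. The statement $[\bw]=[\bw^{-1}]$ iff $\beta$ is long: the similarity relation forgets the signs on the $\vep_0,\vep_n$ letters, so $[\bw]=[\bw^{-1}]$ exactly when $p(\bw)$ is palindromic; tracking through the hook/co-hook structure this palindromy is equivalent to the string touching both loops in a symmetric way, which (via the rank formula \eqref{eq:rkv} that halves the multiplicities at $0$ and $n$) corresponds exactly to $\beta$ being of the long form $k\rho\pm\bet_{ii}$.

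Part (c) is the analogue for $\beta=k\rho$. Here I would first show that any $\bw\in\Stlf(H)$ with $\rks(\bw)=k\rho$ is a string (not a band — bands give band modules, not string modules, and in any case we are asking about the fiber of $\rks$ on strings), and that it must be an isotropic string in the sense of Definition~\ref{def:properties}: it is weakly regular by Remark~\ref{rem:tlf}(1) since $d_\Ome(k\rho)=\bil{k\rho,\rho}_H=0$, and among weakly regular strings those with real rank vector are the $\br_i[m]$ with $m\not\equiv -1\pmod n$ (by the regular-string enumeration) while those with rank vector $k\rho$ are exactly the $\br_i[nk-1]$, one for each $i\in I'$ and each $k$, together with their inverses — this is the content of Remark~\ref{rem:tlf}(2)'s observation that $t''(\br_i[m])=+1$ and $s''(\br_i[m])=s''(\br_{\tau^{-m-1}(i)})$, so weak isotropy $s'=t'$ forces $m\equiv -1\pmod n$. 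The disjointness of the classes $[\br_i[nk-1]]\amalg[\br_i[nk-1]^{-1}]$ over $i\in I'$ comes from the fact that the $\br_i$ sit in distinct positions of the single $\tau$-orbit on $I'$, so their similarity classes cannot coincide even after inversion (one can see this on the level of $p(\br_i[nk-1])\in\St^*(H)$, which records where the string ``starts'' in the $n$-cycle).

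\medskip

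The step I expect to be the main obstacle is the surjectivity half of (a) together with the rigidity statement ``rank vector determines the $\tau$-locally-free string up to $[\,\cdot\,]$ and inversion'' used throughout (b) and (c): it requires a careful bookkeeping of how the operations $\bw\mapsto\bw[1]$ and $\bw\mapsto[1]\bw$ act on rank vectors via $c_H^{\pm 1}$, matched against the explicit root families, and in particular a clean proof that the weakly regular strings are enumerated exactly by the $\br_i[m]$ (up to similarity and inversion) — this last point is where one genuinely uses that $H$ is a string algebra whose regular component structure is the one described in \cite[Sec.~4.5]{Ricke}, and getting the indexing (which $i$, which $m$, and the $m\equiv-1\pmod n$ cutoff for isotropy) correct is the delicate part.
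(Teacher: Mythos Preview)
Your strategy has a genuine circularity problem centered on part~(b). You assert that ``$\beta$ real forces $M_\bw$ (and any $M_\bv$ with $\rks(\bv)=\beta$) to be $\tau$-locally free,'' but this is false: a similarity class $[\bw]$ typically contains many strings, at most one of which (up to inversion) lies in $\Stlf^\tau(H)$. For instance, if $\bp_i$ involves the letter $\vep_0$, then flipping its sign produces $\bw'\in[\bp_i]$ with $\rks(\bw')=\rks(\bp_i)$ a real root, yet $\bw'\notin\Stlf^\tau(H)$. The statement you want --- that each similarity class contains a unique $\tau$-locally-free representative --- is exactly Corollary~\ref{cor:root}, which the paper derives \emph{from} Theorem~\ref{thm:rk-root}, not the other way around. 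Your argument for~(a) on non-$\tau$-locally-free strings is correspondingly too vague to repair this: ``decomposes into a string part governed by a finite type sub-pattern'' is not a proof that the rank vector is a root.

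The paper takes a completely different and much more elementary route that avoids the $\tau$-orbit machinery altogether. The key observation is that a string in $\Stlf^*(H)$ (i.e.\ after collapsing $\vep_i^{\pm1}\mapsto\vep_i^*$) is uniquely determined by its first letter and its length. This yields an explicit finite list of four two-parameter families $\bx_{i,j,k}^{\pm,\pm}$ exhausting all of $\Stlf^*(H)$, and one simply computes $\rks(\bx_{i,j,k}^{\pm,\pm})$ directly, matching each family to one of the root families $k\rho\pm\alp_{ij}$, $k\rho\pm\bet_{ij}$ from Section~\ref{ssec:roots-tCn}. Parts~(a) and~(b) then fall out of these tables together with the evident inversion relations $(\bx_{i,j,k}^{a,b})^{-1}=\bx_{j,i,k}^{b,a}$; the long-root criterion $[\bw]=[\bw^{-1}]$ becomes the palindromy $i=j$ in the $(-,-)$ and $(+,+)$ families. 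For~(c) one reads off which $\bx$ have rank $k\rho$ and checks directly that $p(\br_i[kn-1])$ lands on the right element. No Coxeter transformation, no Auslander--Reiten theory, no appeal to \cite{Ricke} or \cite{HLS21} is needed for this theorem; those enter only later, in the proof of Corollary~\ref{cor:root}.
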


\begin{proof}
  The rank function $\rks\df\Stlf(H)\ra\NN^I$ factors over the projection
  $p\df\Stlf(H)\ra\Stlf^*(H)$ from Section~\ref{ssec:equivlf}. Moreover,
  we observe that a string $\bx\in\Stlf^*(H)$ is uniquely  determined by
  its first letter and the number of letters.
  More precisely, we have precisely the following four types of locally free
  strings, where we abbreviate $I'=\{1,2,\ldots,n\}$ and
  $I'':=\{0,1,\ldots,n-1\}$. 
\begin{align*}
\bx_{i,j,k}^{-,+} &:= \begin{cases}
\eta_{i,j}(\eta_{j,n}\vep_n^*\eta^{-1}\vep_0^*\eta_{0,j})^k &\text{ if } i\leq j,\\
\eta_{i,n}\vep_n^*\eta^{-1}\vep_0^*\eta_{0,j}
(\eta_{j,n}\vep_n^*\eta^{-1}\vep_0^*\eta_{0,j})^k &\text{ if } i>j,
\end{cases}  &&\text{for } i\in I', j\in I'', k\in\NN,\\
\bx_{i,j,k}^{-,-} &:= \eta_{i,n}\vep_n^*\eta_{j,n}^{-1}
(\eta_{0,j}^{-1}\vep_0^*\eta\vep_n^*\eta_{j,n}^{-1})^k &&\text{for } i,j\in I', k\in\NN\\
\bx_{i,j,k}^{+,-} &:= \begin{cases}
  \eta_{j,i}^{-1}(\eta_{0,j}^{-1}\vep_0^*\eta^{+1}\vep_n^*\eta_{j,n}^{-1})^k
  &\text{ if } i\geq j,\\
  \eta_{0,i}^{-1}\vep_0^*\eta^{+1}\vep_n^*\eta_{j,n}^{-1}
  (\eta_{0,j}^{-1}\vep_0^*\eta^{+1}\vep_n^*\eta_{j,n}^{-1})^k &\text{ if } i<j,
\end{cases} &&\text{for } i\in I'', j\in I', k\in\NN,\\
\bx_{i,j,k}^{+,+} &:=
\eta_{0,i}^{-1}\vep_0^*\eta_{0,j}(\eta_{j,n}\vep_n^*\eta^{-1}\vep_0^*\eta_{0,j})^k
&&\text{for } i,j\in I'', k\in\NN.
\end{align*}  

It is straight forward to check, that
\begin{align*}
\rks(\bx_{i,j,k}^{-,+}) &=\begin{cases}
  \phantom{-}\alp_{i,j}+k\rho &\text{ if } i\leq j,\\
  \quad (k+1)\rho            &\text{ if } i = j+1,\\
  -\alp_{j+1,i-1}+(k+1)\rho    &\text{ if } i > j+1,
\end{cases}
 &&\text{for } i\in I', j\in I'', k\in\NN,\\
\rks(\bx_{i,j,k}^{-,-}) &= \begin{cases}
    \phantom{-}\bet_{i,j}+k\rho\hspace*{4.5em}  &\text{ if } i\leq j < n,\\
    \phantom{-}\alp_{i,n}+k\rho        &\text{ if } i\leq j = n,\\
    \phantom{-}\bet_{j,i}+k\rho        &\text{ if } n > i\geq j,\\
    \phantom{-}\alp_{j,n}+k\rho        &\text{ if } n = i\geq j,
  \end{cases}
  &&\text{for } i,j\in I', k\in\NN,\\
\rks(\bx_{i,j,k}^{+,-}) &=\begin{cases}
  \phantom{-}\alp_{j,i}+k\rho  &\text{ if } i\geq j,\\
  \quad (k+1)\rho            &\text{ if } i = j-1,\\
  -\alp_{i+1,j-1}+(k+1)\rho    &\text{ if } i < j-1,
\end{cases}
&&\text{for } i\in I'', j\in I', k\in\NN,\\
\rks(\bx_{i,j,k}^{+,+}) &= \begin{cases}
    -\bet_{i+1,j+1}+(k+1)\rho     &\text{ if } i\leq j < n-1,\\
    -\alp_{i+1,n}\;\;\ +  (k+1)\rho           &\text{ if } i\leq j = n-1,\\
    -\bet_{j+1,i+1}+ (k+1)\rho  &\text{ if } n-1 > i\geq j,\\
    -\alp_{j,n}\quad\;\,\ +  (k+1)\rho  &\text{ if } n-1 = i\geq j,
  \end{cases}
  &&\text{for } i,j\in I'', k\in\NN. \\
\end{align*}
Since we have moreover
\begin{align*}
(\bx_{i,j,k}^{+,-})^{-1} &=\bx_{j,i,k}^{-,+},&&(\bx_{i,j,k}^{-,-})^{-1}=\bx_{j,i,k}^{-,-},\\
(\bx_{i,j,k}^{-,+})^{-1} &=\bx_{j,i,k}^{+,-},&&(\bx_{i,j,k}^{+,+})^{-1}=\bx_{j,i,k}^{+,+},\\
\end{align*}
and in particular $(\bx_{i,i}^{-,-})^{-1}=\bx_{i,i}^{-,-}$ for $i\in I'$, as well as
$(\bx_{j,j}^{+,+})^{-1}=\bx_{j,j}^{+,+}$ for $j\in I''$,
the above calculations show claims (a) and (b). 

For (c) we observe first, that the above calculations show
\[
\{\bx\in\Stlf^*(H)\mid\rks(\bx)=k\rho\}=
\{\bx_{i-1,i,k}^{-,+} \mid i\in I'\}\cup \{\bx_{j,j+1,k}^{+,-} \mid j\in I''\}.
\]
Moreover, it is an easy exercise to verify, that
\[
p(\br_i[kn-1])=\begin{cases}
\bx_{i,i-1,k-1}^{-,+} &\text{ if } (i-1,i)\in\Ome,\\
\bx_{i-1,i,k-1}^{+,-} &\text{ if } (i,i-1)\in\Ome,
\end{cases}
\]
and in  particular
$\{p(\br_i[kn-1])^{\pm 1}\mid i\in I'\}=\{\bx\in\Stlf^*(H)\mid\rks(\bx)=k\rho\}$.
\end{proof}

\begin{Rem}
Recall that $\rho=(1,2,2,\ldots, 2,1)$ is the minimal  isotropic root
in $\Del^+(\tsfC_n)$. In view of our discussion in
Section~\ref{ssec:locfree}, the above result shows, that here
the set of rank vectors of \emph{all} indecomposable locally free
$H=H_\CC(\tsfC_n,\Ome,D)$-modules can be identified with $\Del^+(\tsfC_n)$.

In~\cite[Sec.~5.3]{GLS2} the authors conjectured that for any generalized
symmetrizable Cartan matrix $C$ with symmetrizer $D$ and any acyclic
orientation $\Ome$ the rank vectors of all $\tau$-locally free indecomposable
$H_K(C,D,\Ome)$-modules are in bijection with the positive roots $\Del^+(C)$.
They showed that this is true for $C$ of finite type, whilst already in
type $\sfB_3$ there exist indecomposable locally free modules $M$ with
$\rkv(M)$ \emph{not} a root.  In~\cite{HLS21} the above-mentioned conjecture is
verified for type $\tsfC_n$ with \emph{minimal} symmetrizer $D$,
by comparing rank vectors with the dimension
vectors of representations for a Dlab-Ringel species~\cite{DR} of type
$\tsfC_n$.   In our language, this means in particular that 
$\rks(\Stlf^\tau(H))=\Del^+(\tsfC_n)$.  In fact,  we have the following
more precise result, which will be useful.
\end{Rem}

\begin{Cor} \label{cor:root}
  Under the above hypothesis, for each $\bw\in\Stlf(H)$ there exists
  a $\bw_\tau\in\Stlf^\tau(H)$ such that 
  \[
    [\bw]\cap\Stlf^\tau(H)=\begin{cases} \{\bw_\tau, \bw_\tau^{-1}\} &\text{ if }
    \rks(\bw)\in\Del_{\real}^+ \text{ is a long root,}\\
    \{\bw_\tau\} &\text{ else.}
    \end{cases}
    \]
 More precisely, for each $\alp\in\Del^+_\real(\tsfC_n)$ we have
    \[
    \rks^{-1}(\alp)=\begin{cases}
        [([k]\bp_i[k])]\cup [([k]\bp_i[k])^{-1}] &
    \text{for a unique } (i,k)\in I\times\NN \text{ if } d_{\Ome}(\alp)>\,0,\\
             [(\br_i[k])]\cup [(\br_i[k])^{-1}] &
    \text{for a unique } (i,k)\in I'\!\times\NN \text{ if } d_{\Ome}(\alp)=0,\\
        [([-k]\bq_i[-k])]\cup [([-k]\bp_i[-k])^{-1}] &
        \text{for a unique } (i,k)\in I\times\NN \text{ if } d_{\Ome}(\alp) <\, 0.     
    \end{cases}
\]
Moreover, we have in this situation $ [([k]\bp_i[k])]=[([k]\bp_i[k])^{-1}] $
resp. $ [([-k]\bq_i[-k])]=[([-k]\bq_i[-k])^{-1} $ if and only if
  $\alp$ is a long root.
\end{Cor}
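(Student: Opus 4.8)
The plan is to deduce Corollary~\ref{cor:root} from Theorem~\ref{thm:rk-root} by matching the combinatorial $\tau$-orbits of locally free strings with the $\tau$-orbits of roots, using the Coxeter transformation as the bridge. First I would recall that $\rks$ factors through the projection $p\df\Stlf(H)\ra\Stlf^*(H)$, so every statement may be checked on the four families $\bx^{\pm,\pm}_{i,j,k}$ classified in the proof of Theorem~\ref{thm:rk-root}. From that classification, $\rks^{-1}(\alp)$ is already known to be a single class $[\bw]\cup[\bw^{-1}]$ for $\alp$ a real root, with $[\bw]=[\bw^{-1}]$ exactly when $\alp$ is long; this is precisely Theorem~\ref{thm:rk-root}(b). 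So the real content of the corollary is to identify \emph{which} representatives $\bw$ of these classes are $\tau$-locally free, i.e.\ lie in $\Stlf^\tau(H)=\{([k]\bp_i[k])^{\pm 1}\}\cup\{(\br_i[k])^{\pm 1}\}\cup\{([-k]\bq_i[-k])^{\pm 1}\}$, and to exhibit the asserted uniqueness of the pair $(i,k)$.

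The key steps, in order: (1) Split according to the defect $d(\alp)=d_\Ome(\rks(\bw))$. By Remark~\ref{rem:tlf}(1), $d(\alp)>0$, $=0$, $<0$ corresponds to $\bw$ being weakly preprojective, weakly regular, weakly preinjective, and this is stable under similarity, so the three cases of the corollary are disjoint and exhaustive. (2) In the regular case $d(\alp)=0$: by Remark~\ref{rem:tlf}(2) the strings $\br_i[k]$ are weakly regular and $\tau$-locally free by Definition~\ref{def:properties}; by Remark~\ref{rem:tlf}(3), $M_{\br_{\tau(i)}}$ is the AR-translate of $M_{\br_i}$, so $\rks(\br_i[k])=c_H^{-k}\rks(\br_{\ldots})$ runs without repetition over the regular real roots as $(i,k)$ varies (the isotropic values $k\rho$ are excluded here by choosing $k$ in the complementary range, as in Theorem~\ref{thm:rk-root}(c)); uniqueness of $(i,k)$ then follows exactly as in Lemma~\ref{lem:pq-unique}, from \cite[Lemma~2.1]{GLS1} together with the fact that $c_H$ has infinite order since $\tsfC_n$ is not of finite type. (3) In the preprojective case $d(\alp)>0$: Lemma~\ref{lem:proj} shows each $\bp_i$ is of the form $\be_\bullet[j]$ or $[j]\be_\bullet$, hence weakly preprojective; Remark~\ref{rem:tlf}(3) gives $\rks([k]\bp_i[k])=c_H^{-k}\rkv(P_i)$, and Lemma~\ref{lem:pq-unique} already records that this family consists of pairwise distinct real roots, which gives existence and uniqueness of $(i,k)$; that these strings are $\tau$-locally free is Definition~\ref{def:properties} together with Remark~\ref{rem:tlf}(2). (4) The preinjective case $d(\alp)<0$ is dual, using $\rks([-k]\bq_i[-k])=c_H^k\rkv(J_i)$, the dual half of Lemma~\ref{lem:proj}, and again Lemma~\ref{lem:pq-unique}. (5) The long-versus-short dichotomy: combine the displayed formulas for $\rks(\bx^{\pm,\pm}_{i,j})$ in the proof of Theorem~\ref{thm:rk-root} (the long roots being $k\rho\pm\bet_{ii}$ and the self-inverse strings being exactly $\bx^{-,-}_{i,i}$ and $\bx^{+,+}_{j,j}$) with Theorem~\ref{thm:rk-root}(b); since $[\bw]=[\bw^{-1}]$ iff $\rks(\bw)$ is long, the two classes $[([k]\bp_i[k])]$ and $[([k]\bp_i[k])^{-1}]$ coincide iff $\alp$ is long, and likewise on the injective side.

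The main obstacle I anticipate is step~(2)/(5): one must be careful that the preprojective, regular, and preinjective representatives named in $\Stlf^\tau(H)$ really do exhaust $[\bw]\cap\Stlf^\tau(H)$ and do not overlap — in particular that $[\bw]\cap\Stlf^\tau(H)$ contains \emph{no other} $\tau$-locally free string beyond the asserted one or two. This is where Remark~\ref{rem:tlf}(4) (a string module $M_\bw$ is $\tau$-locally free iff $\bw\in\Stlf^\tau(H)$) together with the explicit normal forms $\bx^{\pm,\pm}_{i,j,k}$ from Theorem~\ref{thm:rk-root} must be invoked to pin down, within each similarity class, exactly the preprojective/regular/preinjective normal form; the bookkeeping that $p(\br_i[kn-1])$ lands on $\bx^{-,+}_{i,i-1,k-1}$ or $\bx^{+,-}_{i-1,i}$ (already checked in the proof of Theorem~\ref{thm:rk-root}) is the model for this, and the remaining cases are a routine but somewhat lengthy verification of the same flavour. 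Everything else is a direct translation of Lemma~\ref{lem:pq-unique} and Theorem~\ref{thm:rk-root}(b) into the stated form.
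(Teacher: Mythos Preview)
Your preprojective and preinjective uniqueness arguments via Lemma~\ref{lem:pq-unique} match the paper's, and the long/short dichotomy via Theorem~\ref{thm:rk-root}(b) is exactly right. But there are two genuine problems.

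First, your regular case (step~(2)) has a real gap. The operation $\br_i\mapsto\br_i[k]$ is \emph{not} an iterated Auslander--Reiten translation: it appends $k$ hooks on one side only, whereas Remark~\ref{rem:tlf}(3) says that AR-translation corresponds to adding a hook on \emph{each} side (for weakly preprojective strings) or that $\tau_H M_{\br_i}\cong M_{\br_{\tau(i)}}$ (for the simple regular strings). So the formula ``$\rks(\br_i[k])=c_H^{-k}\rks(\br_{\ldots})$'' is false, and in any case the Coxeter element acts with \emph{finite} orbits on the regular roots (it fixes $\rho$), so the infinite-order argument from \cite[Lemma~2.1]{GLS1} behind Lemma~\ref{lem:pq-unique} cannot apply here. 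The paper handles uniqueness in the regular case by a different, purely combinatorial argument: within a similarity class the invariants $(t'(\bw),t''(\bw))$ and $(s'(\bw),s''(\bw))$ are constant, and since $t''(\br_i[k])=-1$ while $t''((\br_j[l])^{-1})=+1$, any $\tau$-locally free regular representative in $[\bw_\tau]$ must be of the form $\br_{i'}[k']$ (not an inverse); then $t'(\br_{i'}[k'])=\tau(i')$ forces $i'=i$, and length forces $k'=k$. Your ``main obstacle'' paragraph gestures toward this kind of normal-form check, but you have it backwards: this \emph{is} the argument, not a residual bookkeeping issue.

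Second, you never establish \emph{existence} of a $\tau$-locally free representative in each class. Lemma~\ref{lem:pq-unique} only says the rank vectors $\rks([k]\bp_i[k])$ are pairwise distinct; it does not say they exhaust the preprojective real roots, and your text does not supply that step. The paper fills this gap by invoking \cite{HLS21} (see the Remark preceding the Corollary) for the statement $\rks(\Stlf^\tau(H))=\Del^+(\tsfC_n)$, and then combines it with Theorem~\ref{thm:rk-root}(b) to find $\bw_\tau\in[\bw]\cap\Stlf^\tau(H)$. You need either to cite that input or to prove surjectivity directly.
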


\begin{proof}
  Let $\bw\in\Stlf(H)$. By Theorem~\ref{thm:rk-root}~(a) we have
  $\rks(\bw)\in\Del^+(\tsfC_n)$. If $\rks(\bw)\in\Del^+_\imag(\tsfC_n)$,
  our claim follows from Theorem~\ref{thm:rk-root}~(c). Thus, we may assume
 $\rks(\bw)\in\Del^+_\real(\tsfC_n)$. In this case,
  by Theorem~\ref{thm:rk-root}~(b) and the above-mentioned result
  from~\cite{HLS21}, we find $\bw_\tau\in [\bw]\cap\Stlf^\tau(H)$. If moreover
  $\rks(\bw)=\rks(\bw_\tau)$ is preprojective, i.e. $d_\Ome(\rks(\bw_\tau))>0$,
  we may assume $\bw_\tau=[k]\bp_i[k]$ for some $(i,k)\in I\times\NN$ by the
  definition of $\Stlf^\tau(H)$, and our claim follows from
  Lemma~\ref{lem:pq-unique}. The case  $\rks(\bw)=\rks(\bw_\tau)$ preinjective
  is similar.

  If $d_\Ome(\rks(\bw_\tau))=0$, by the definition of $\Stlf^\tau(H)$  we may
  assume $\bw_\tau=\br_i[k]$ for some $(i,k)\in I'\times\NN$. Now
  $\bw'\in[\bw_\tau]\cap\Stlf^\tau(H)$ implies $\bw'=\br_{i'}[k']$ for
  some $(i',k')\in I'\times\NN$, since we need $t''(\bw')=t''(\bw_\tau)=+1$.
  By the same token we have $i'=i$, since  
  $t'(\bw')=t'(\br_i[k])=\tau(i)$.  This forces $k=k'$ for length reasons.
\end{proof}

\subsection{Factorization of locally free strings}
We have the following easy consequence of Corollary~\ref{cor:root}.
\begin{Cor}
  Let $\bw\in\Stlf(H)$ be weakly preprojective with
  $\rks(\bw)\not\in\{\alp_i\mid i\in I\}$, then there exists a weakly
  preprojective $\bv\in\Stlf(H)$ with $[\bv[1]]=[\bw]$.
\end{Cor}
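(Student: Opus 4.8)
The plan is to use Corollary~\ref{cor:root}, which tells us that every weakly preprojective locally free string $\bw$ is similar to some $\tau$-locally free string of the form $[k]\bp_i[k]$ (or its inverse), where the pair $(i,k)\in I\times\NN$ is uniquely determined by $\rks(\bw)$. Since $\rks(\bw)$ is not a simple root $\alp_i$, the string $[k]\bp_i[k]$ is not of the form $\be_j$ for any $j\in I$; in particular $(i,k)\neq(i,0)$ with $\bp_i=\be_i$ a trivial string, so the string $[k]\bp_i[k]$ genuinely has positive length, and hence it is obtained from a strictly shorter $\tau$-locally free string by one application of an operator. The goal is to produce a weakly preprojective $\bv$ with $[\bv[1]]=[\bw]$, so after replacing $\bw$ by the similar string $[k]\bp_i[k]$ (note that ``weakly preprojective'' and the similarity class are both preserved), it suffices to exhibit a weakly preprojective string whose image under $[1]$ is similar to $[k]\bp_i[k]$.

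The key step is to analyze the two cases from Lemma~\ref{lem:proj} for the shape of $\bp_i$. If $t(\bp_i)$ is a sink of $Q^o$ and $\bp_i=\be_{t(\bp_i)}[j]$ for some $j\in\NN$, then $[k]\bp_i[k]=[k]\be_{t(\bp_i)}[j+k]$. Provided $j+k\geq 1$ (which is guaranteed precisely when $[k]\bp_i[k]\neq\be_\ell$ for any $\ell$, i.e. when $\rks(\bw)$ is not a simple root), we can ``undo'' one step of the $[1]$ operator on the right: set $\bv:=[k]\be_{t(\bp_i)}[j+k-1]$, and then $\bv[1]=[k]\bp_i[k]$ on the nose, so certainly $[\bv[1]]=[\bw]$. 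One must check that $\bv$ is still weakly preprojective: by Remark~\ref{rem:tlf}~(2), applying $[1]$ or $[-1]$ on either side to a weakly preprojective string yields a weakly preprojective string, and since $\be_\ell$ for $\ell$ a sink is $\bp_\ell$ and hence weakly preprojective (Remark~\ref{rem:tlf}~(2) again), and $\bv$ is obtained from such an $\be_{t(\bp_i)}$ by a sequence of $[1]$ and $[k]$ operators, $\bv$ is weakly preprojective. The symmetric case, where $s(\bp_i)$ is a sink and $\bp_i=[j]\be_{s(\bp_i)}$, is handled by the mirror argument: if $k\geq 1$ write $\bv:=[k-1]\bp_i[k]$; if $k=0$ then $\rks(\bw)=\rks(\bp_i)$ and $\bp_i=[j]\be_{s(\bp_i)}$ with $j\geq 1$ forced, so write $\bv:=[j-1]\be_{s(\bp_i)}[0]=[j-1]\be_{s(\bp_i)}$ and then $\bv[1]=[j]\be_{s(\bp_i)}=\bp_i$.

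The main obstacle is making sure that we never attempt to remove a step from a string that does not admit it, i.e.\ that the hypothesis $\rks(\bw)\notin\{\alp_i\mid i\in I\}$ exactly rules out the degenerate situation $[k]\bp_i[k]=\be_\ell$. For this I would argue contrapositively: if $k=0$ and $\bp_i=\be_\ell$ is a trivial string, then $i=\ell$ is a sink of $Q^o$ and $\rks(\bw)=\rks(\be_\ell)=\alp_\ell$, which is excluded; and if $k=0$ but $\bp_i$ has positive length, then one of the two Lemma~\ref{lem:proj} alternatives applies with $j\geq 1$ and we are in the situation handled above; and if $k\geq 1$ then $[k]\bp_i[k]$ certainly has positive length regardless. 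A secondary, purely bookkeeping point is that the identity $\bv[1]=[k]\bp_i[k]$ (rather than merely $[\bv[1]]=[\bw]$) holds at the level of actual strings in $\Stlf(H)$, which follows directly from the recursive definition $\bw[n+1]:=(\bw[n])[1]$ in Section~\ref{ssec:equivlf}; the passage to similarity classes is then immediate since $\rks(\bv[1])=\rks([k]\bp_i[k])=\rks(\bw)$ and, by Corollary~\ref{cor:root}, $\rks$ together with weak preprojectivity determines the similarity class up to inversion, so $[\bv[1]]=[\bw]$ as claimed.
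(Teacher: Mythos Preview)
Your overall strategy matches the paper's: reduce via Corollary~\ref{cor:root} to $[\bw]=[[k]\bp_i[k]]$ and then peel off one application of the operator $(-)[1]$. The paper treats the case $k\geq 1$ uniformly by taking $\bv=[k]\bp_i[k-1]$ (so that $\bv[1]=[k]\bp_i[k]$ on the nose, regardless of which alternative in Lemma~\ref{lem:proj} holds), and invokes Lemma~\ref{lem:proj} only for $k=0$. Your first case, $\bp_i=\be_{t(\bp_i)}[j]$, is handled correctly and in fact more explicitly than in the paper.

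Your ``symmetric case'' ($\bp_i=[j]\be_{s(\bp_i)}$), however, contains a genuine error: you have confused the left operator $\bw\mapsto[1]\bw$ with the right operator $\bw\mapsto\bw[1]$. If $\bv=[k-1]\bp_i[k]$ then $\bv[1]=[k-1]\bp_i[k+1]$, not $[k]\bp_i[k]$; and if $k=0$ and $\bv=[j-1]\be_{s(\bp_i)}$ then $\bv[1]=([j-1]\be_{s(\bp_i)})[1]$ appends a hook on the \emph{right}, whereas $[j]\be_{s(\bp_i)}$ differs from $[j-1]\be_{s(\bp_i)}$ by a hook on the \emph{left}. Concretely, in Example~\ref{expl:ct5} one has $\bp_2=[2]\be_4=\eta_3^{-1}\eta_4^{-1}$; your recipe for $k=0$ gives $\bv=[1]\be_4=\bp_3=\eta_4^{-1}$, and then $\bv[1]=\eta_4^{-1}\eta_5\br_5$, which has rank vector $\alp_3+2\alp_4+\alp_5\neq\rks(\bp_2)$ and hence is not in $[\bp_2]$. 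For $k\geq 1$ the fix is simply the paper's choice $\bv=[k]\bp_i[k-1]$, which does not depend on the case distinction in Lemma~\ref{lem:proj}.

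Two minor slips: Remark~\ref{rem:tlf}~(2) only asserts that $(-)[1]$ and $[1](-)$ preserve weak preprojectivity, not $(-)[-1]$; and the recursive definition $\bw[m+1]:=(\bw[m])[1]$ appears in the subsection on operations on locally free strings, not in Section~\ref{ssec:equivlf}.
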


  \begin{proof}
    By Corollary~\ref{cor:root} we may assume $[\bw]=[([k]\bp_i[k])]$ for
    some $(i,k)\in I\times\NN$. If $k\geq 1$ we can take $\bv=[k]\bp_i[k-1]$,
    which is also weakly preprojective.  If $k=0$ we can proceed with
    Lemma~\ref{lem:proj}. 
  \end{proof}

\begin{Rem} \label{rem:fact}
    Suppose that $\bv\in\Stlf(H)$ with $(s'(\bv),s''(\bv))=(i,+1)$ for some
    $i\in I'$, then we have
\[
    [\bv[1]]=\{\bv'\eta_i\br'\mid \bv'\in [\bv],\ \br'\in[\br_i]\},
\]
and moreover, these factorizations are unique.
\end{Rem}

\section{Main result}
\subsection{Characteristic functions}
Recall that $H=H_\CC(\tsfC_n,D,\Ome)$ with $D$ the minimal symmetrizer,
is our string algebra.
For each locally free string $\bw\in\Stlf(H)$ we define, with the notation
from Sections~\ref{ssec:prplf} and~\ref{ssec:AlgConstrF}, the
constructible function
\[
\chi_{[\bw]}:=\begin{cases}
\ \sum_{\bw'\in [\bw]} \chi_{\bw'} &\text{if } \bw^{-1}      \not\in [\bw],\\
\frac{1}{2}\sum_{\bw'\in [\bw]} \chi_{\bw'} &\text{if } \bw^{-1} \in [\bw],
 \end{cases}
\]
which belongs to $\cF_{D\cdot\rks(\bw)}(H)$, and
$\chi_{[\bw]}$ is supported on indecomposable representations in
$\replf(H,\rks(\bw))\subset\rep(H, D\cdot\rks(\bw))$. However,
at this stage it is not clear if $\chi_{[\bw]}\in\cP_{\rks(\bw)}(H)\subset\cM_{\rks(\bw)}(H)$.

\begin{Lem}
Let be $H=H(\tsfC_n,D,\Ome)$, then we have, with the just introduced notation,
the following:
\begin{itemize}
\item[(a)]     
  If $\alp\in\Del^+_\real(\tsfC_n)$ is a real root, there exists
  $\bw\in\Stlf(H)$, which is weakly isotropic, such that $\rks(\bw)=\alp$ and
  \[
  0\neq\Theta_\alp:=\chi_{[\bw]}\in\cF_{D\cdot\alp}
  \]
  is the characteristic function of the set of all indecomposable locally
  free representations $M$  of $H$ with $\rkv(M)=\alp$.
  In particular, $\Theta_\alp$ does not depend on the choice of $\bw$.
\item[(b)]
  For each $k\in\ZZp$ the functions $\sum_{i\in I'}\chi_{[\br_i[kn-1]]}$
  is the characteristic function of all locally free string modules $M$
  with $\rkv(M)=k\rho$.  In particular
  $(\chi_{\br_i}[kn-1])_{i\in I'}$ is a linearly independent family of
  constructible functions in $\cF_{k D\rho}(H)$.  
\end{itemize}
\end{Lem}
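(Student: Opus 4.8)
The plan is to prove both parts by exhibiting explicit witnesses and then invoking the structural results already established, chiefly Theorem~\ref{thm:rk-root} together with the description of $\rks^{-1}$ given there.

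\textbf{Part (a).} First I would use Theorem~\ref{thm:rk-root}~(b): for a real root $\alp\in\Del^+_\real(\tsfC_n)$ the fibre $\rks^{-1}(\alp)$ equals $[\bw]\cup[\bw^{-1}]$ for any $\bw\in\Stlf(H)$ with $\rks(\bw)=\alp$. Among the four families $\bx^{\pm,\pm}_{i,j,k}$ classifying $\Stlf^*(H)$, the ones with mismatched superscripts ($\bx^{-,+}$ and $\bx^{+,-}$) are exactly the weakly isotropic ones when $i$ and $j$ are suitably related (recall weakly isotropic means weakly regular with $s'(\bw)=t'(\bw)$); from the rank-vector tables in the proof of Theorem~\ref{thm:rk-root} one reads off that every real root $\alp$, written as $k\rho+\alp_{ij}$, $k\rho-\alp_{ij}$, $k\rho+\bet_{ij}$ or $k\rho-\bet_{ij}$, is realized by such a weakly isotropic string $\bw$. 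Then $\chi_{[\bw]}$ is, by its very definition together with the definition of similarity, the sum (or half-sum) of the orbit functions $\chi_{\bw'}$ over all $\bw'\in\rks^{-1}(\alp)$; since distinct similarity classes of locally free strings give non-isomorphic indecomposable locally free modules and Theorem~\ref{thm:rk-root}~(b) tells us these exhaust the indecomposable locally free $M$ with $\rkv(M)=\alp$, this is precisely the characteristic function of that set. Non-vanishing is immediate since the set is non-empty, and independence of the choice of $\bw$ is then automatic because the right-hand description makes no reference to $\bw$. The factor $\tfrac12$ in the long-root case matches the coincidence $[\bw]=[\bw^{-1}]$ from Theorem~\ref{thm:rk-root}~(b), so that $\Theta_\alp$ remains a genuine $\{0,1\}$-valued characteristic function in all cases.

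\textbf{Part (b).} Here I would apply Theorem~\ref{thm:rk-root}~(c), which states that for $\rks(\bw)=k\rho$ one has
\[
\rks^{-1}(k\rho)=\coprod_{i\in I'}\bigl([\br_i[kn-1]]\ \textstyle{\coprod}\ [\br_i[kn-1]^{-1}]\bigr).
\]
Consequently $\sum_{i\in I'}\chi_{[\br_i[kn-1]]}$ is the sum of $\chi_{\bw'}$ over all locally free strings $\bw'$ of rank vector $k\rho$, i.e.\ the characteristic function of the set of all indecomposable locally free \emph{string} modules $M$ with $\rkv(M)=k\rho$ (the band modules are excluded precisely because they are not string modules). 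Linear independence of $(\chi_{[\br_i[kn-1]]})_{i\in I'}$ in $\cF_{kD\rho}(H)$ follows because the index sets $[\br_i[kn-1]]\cup[\br_i[kn-1]^{-1}]$, $i\in I'$, are pairwise disjoint by the displayed coproduct, so the corresponding functions have pairwise disjoint supports and each is non-zero.

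\textbf{Main obstacle.} The only genuinely non-routine point is the bookkeeping in Part (a): verifying that one may always select the witness $\bw$ to be \emph{weakly isotropic} (not merely weakly regular), and matching the long-root case with the self-inverse phenomenon $[\bw]=[\bw^{-1}]$. Concretely this means checking, case by case over the four root families, that the relevant $\bx^{-,+}_{i,j,k}$ or $\bx^{+,-}_{i,j,k}$ has $s'=t'$ exactly when needed, and reconciling that with the $\tfrac12$-normalization so that $\Theta_\alp$ comes out as an honest characteristic function; all of this is combinatorics already implicit in the rank-vector computations of Theorem~\ref{thm:rk-root}, but it needs to be spelled out. Part (b) is then essentially a direct quotation of Theorem~\ref{thm:rk-root}~(c) with a one-line disjoint-support argument for independence.
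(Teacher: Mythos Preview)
Your overall strategy matches the paper's: both parts are deduced from Theorem~\ref{thm:rk-root}, with (b) being essentially a direct quotation of part~(c) there plus the disjoint-support observation. The paper's proof of (b) is the same one-liner you give.

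There are two issues with your treatment of (a). First, a small but genuine omission: to conclude that the string modules $M_{\bw'}$ with $\bw'\in[\bw]\cup[\bw^{-1}]$ exhaust \emph{all} indecomposable locally free modules of rank vector $\alp$, you must rule out band modules. The paper does this explicitly by recalling (from Section~\ref{ssec:locfree}) that every band module has an isotropic rank vector, hence cannot have rank vector a real root. You invoke only Theorem~\ref{thm:rk-root}~(b), which classifies locally free \emph{strings}; without the band remark the argument is incomplete.

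Second, your attempt to verify that $\bw$ can be chosen \emph{weakly isotropic} is incorrect, and in fact the clause ``which is weakly isotropic'' in the statement appears to be a slip: the paper's own proof makes no attempt to establish it, and it is false as stated. A weakly isotropic string is by definition weakly regular with $s'(\bw)=t'(\bw)$; from the tables in the proof of Theorem~\ref{thm:rk-root}, the roots of the form $k\rho\pm\bet_{ij}$ are realized only by the families $\bx^{-,-}$ and $\bx^{+,+}$, which have matching superscripts and are therefore weakly preprojective or weakly preinjective, not weakly regular. So your claim that ``every real root $\alp$\dots is realized by such a weakly isotropic string'' fails for these roots. The correct reading of the lemma (and what the paper actually proves) is simply that \emph{some} $\bw\in\Stlf(H)$ with $\rks(\bw)=\alp$ exists, with no further qualifier; your identification of this as the ``main obstacle'' was well-founded, but the resolution is that the condition should be dropped rather than verified.
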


\begin{proof}
(a) Recall from the discussion in Section~\ref{ssec:locfree}, that the rank
vectors of all band modules are isotropic roots.
Thus, for $\alp$ a \emph{real} root, there exists no band module $M_{\bb,t,m}$
with  $\rkv(M_{\bb,t,m})=\alp$.
On the other hand, by Theorem~\ref{thm:rk-root}~(a) and~(b), in this situation
there exists a $\bw\in\Stlf(H)$ such that
 $\rks^{-1}(\alp)=[\bw]\cup [\bw^{-1}]$.
Now,   if $\bw^{-1}\not\in [\bw]$, we
have indeed $\{\bv^{-1}\mid \bv\in [\bw]\}=[\bw^{-1}]$ and
  $[\bw]\cap [\bw^{-1}]=\emptyset$.  Thus, in this case, the elements of
  $[\bw]$ represent all isomorphism classes of locally free representations with rank
vector $\rks(\bw)$ without repetition. 
If in turn $\bw^{-1}\in [\bw]$, the root $\rks(\bw)$ is
long, and $\bv\neq \bv^{-1}\in [\bw]$ for all $\bv\in [\bw]$. This means
that in this situation each indecomposable, locally free string module $M$
with $\rkv(M)=\alp$ is represented by exactly \emph{two} elements in
$[\bw]$.

Thus, $\chi_{[\bw]}$ is in both cases the
  characteristic function of \emph{all} indecomposable, locally free
  modules of $H$, which have rank vector $\alp$.

(b) The first claim follows from Theorem~\ref{thm:rk-root}~(c). The linear
independence follows immediately from this first observation.
\end{proof}

\subsection{Primitive elements}   
Let $H=H_\CC(\tsfC_n, D,\Ome)$ as before. Recall that the functions 
in the generalized composition algebra $\cM(H)\subset\cF(H)$ are supported by the locally free modules.  
The Lie algebra of primitive elements $\cP(H)\subset\cM(H)$  is
graded by the root lattice. More precisely, 
$\cP(H)=\oplus_{\alp\in\Del^+(\tsfC_n)}\cP_\alp(H)$, since we have $U(\cP(H))\cong\cM(H)$.
We are now ready to state our main result.
\begin{Thm} \label{thm:primitive}
With the above notation, the following holds:
\begin{itemize}
\item[(a)] For each real root $\alp\in\Del^+_\real(\tsfC_n)$  there exists
  a locally free string $\bw\in\Stlf(H)$, such that the characteristic function
  \[
    \Theta_\alp:=\chi_{[\bw]}\in\cF_{D\cdot\alp}(H)
    \]
  actually spans  $\cP_\alp(H)$.  
\item[(b)]
  For each $k\in\ZZp$ the constructible functions
\[
\Theta_{k\rho}^{(i)}:=\chi_{[(\br_i[kn-1])]} - \chi_{[(\br_{\tau(i)}[kn-1])]}
\in\cF_{D\cdot(k\rho)}(H)
\quad\text{for } i=1,2,\ldots,n-1,
\]
are linearly independent, and belong actually to $\cP_{k\rho}(H)$.
\item[(c)]
  For each $k\in\ZZp$ we have
  $k\rho-\alp_n\in\Del^+_\real(\tsfC_n)$, and with
  \[
  \Theta_{k\rho}^{(0)}:= [\Theta_{k\rho-\alp_j},\theta_j],
  \]
 the functions $\Theta^{(i)}_{k\rho}$ for $j=0,1,\ldots,n-1$ form a basis
  of $\cP_{k\rho}(H)$.
\end{itemize}
\end{Thm}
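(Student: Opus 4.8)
The plan is to prove the three parts by an intertwined induction on the height of the root, using the defect $d_\Ome$ to organize the argument into the preprojective, regular, and preinjective cases, with Haupt's formula (Theorem~\ref{thm:Haupt}) as the computational engine throughout. The base cases are the simple roots $\alp_i$, where $\Theta_{\alp_i}=\theta_i$ spans $\cP_{\alp_i}(H)=\fn_{\alp_i}$ trivially, and the ``simple regular'' roots $\rho_j:=\rks(\br_j)$ for $j\in I'$, for which $\Theta_{\rho_j}=\chi_{[\br_j]}$ is the characteristic function of all indecomposable locally free modules of that rank vector (by the previous lemma) and is primitive by Lemma~\ref{lem:indec-mult}-type reasoning since there are no bands in this degree. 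For the inductive step on a real root $\alp$ with $d_\Ome(\alp)>0$, I would use the factorization $[\bw]=[\bv[1]]=\{\bv'\eta_i\br'\mid \bv'\in[\bv],\ \br'\in[\br_i]\}$ from Remark~\ref{rem:fact} to show $\Theta_\alp = \pm[\Theta_{\rks(\bv)},\Theta_{\rho_i}]$ (for the appropriate $i$, and up to a nonzero scalar), evaluating both sides on each indecomposable locally free $M_\bx$ via Haupt's formula: the right side counts pairs of morphisms of windings $(G,H)$ into $F_\bx$, and the key point is that such a pair exists and contributes nonzero precisely when $\bx$ corresponds (up to the equivalence $[-]$ and inversion) to a string of the form $\bv'\eta_i\br'$, i.e.\ to an element of $[\bw]\cup[\bw^{-1}]$. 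Counting multiplicities and using that $\rks(\bw)$ is long iff $[\bw]=[\bw^{-1}]$ pins down the scalar $\frac12$ appearing in $\chi_{[\bw]}$, so the bracket lies in $\cP_\alp(H)$ and is nonzero; since $\dim\cP_\alp(H)\le\dim\fn_\alp=1$, it must span. The preinjective case $d_\Ome(\alp)<0$ is handled symmetrically using $\bw=[-1]\bv$ and $\br'_i$, and the regular real roots $\alp=k\rho+\bet_{ij}$ etc.\ with $d_\Ome(\alp)=0$ but $\alp\notin\ZZp\rho$ are reached by brackets of the form $[\Theta_{\alp-\rho_j},\Theta_{\rho_j}]$ with $\rks(\bw)=[\bv[1]][-1]$ or similar, again verified by Haupt's formula.

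For part~(b), the elements $\Theta_{k\rho}^{(i)}=\chi_{[\br_i[kn-1]]}-\chi_{[\br_{\tau(i)}[kn-1]]}$ are differences of characteristic functions of (disjoint families of) indecomposable locally free \emph{string} modules, so each is automatically supported on indecomposables; to see it is primitive I need $\Del(\Theta_{k\rho}^{(i)})=\Theta_{k\rho}^{(i)}\otimes 1+1\otimes\Theta_{k\rho}^{(i)}$, which by \cite[Lemma 4.6]{GLS3} is equivalent to the support consisting of indecomposables --- already guaranteed --- once we know $\Theta_{k\rho}^{(i)}\in\cM(H)$. For membership in $\cM(H)$ I would exhibit $\Theta_{k\rho}^{(i)}$, or rather the individual $\chi_{[\br_i[kn-1]]}$, as arising from iterated brackets of the generators $\theta_j$: concretely, using $\br_i[kn-1]=\br_i[1][1]\cdots[1]$ one writes it as a nested commutator of $\Theta_{\rho_j}$'s and simples, and the telescoping of the ``$[1]$'' operations shows $\chi_{[\br_i[kn-1]]}$ differs from such a bracket only by lower terms that cancel in the difference $\chi_{[\br_i[kn-1]]}-\chi_{[\br_{\tau(i)}[kn-1]]}$. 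Linear independence of $(\Theta_{k\rho}^{(i)})_{i=1}^{n-1}$ follows from the disjointness of the supports $[\br_i[kn-1]]$ for distinct $i\in I'$ (the previous lemma), since these $n-1$ differences are linearly independent elements of the span of the $n$ functions $\chi_{[\br_i[kn-1]]}$, $i\in I'$ (their coefficient vectors form a rank-$(n-1)$ subspace as $\tau$ is an $n$-cycle).

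For part~(c), first $k\rho-\alp_n\in\Del^+_\real(\tsfC_n)$ by the explicit root description in Section~\ref{ssec:roots-tCn} (it is $k\rho-\alp_{nn}$ in the notation there), so $\Theta_{k\rho-\alp_n}$ makes sense by part~(a), and $\Theta_{k\rho}^{(n)}=[\Theta_{k\rho-\alp_n},\theta_n]$ lies in $\cM_{k\rho}(H)$, hence in $\cP_{k\rho}(H)$ by Lemma~\ref{lem:indec-mult}; more generally $\Theta_{k\rho}^{(n-j)}$ or the stated $[\Theta_{k\rho-\alp_j},\theta_j]$ each lie in $\cP_{k\rho}(H)$. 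Since $\dim\cP_{k\rho}(H)\le\dim\fn_{k\rho}=n$ and we will have produced $n$ elements --- the $n-1$ string-supported ones from~(b) together with $\Theta_{k\rho}^{(n)}$ --- it suffices to prove these $n$ are linearly independent, for which the decisive point is that $\Theta_{k\rho}^{(n)}$ is \emph{not} in the span of the others: this is exactly the claim that $\supp(\Theta_{k\rho}^{(n)})$ contains a band module. I would verify this by evaluating $[\Theta_{k\rho-\alp_n},\theta_n](M_{(\bb,t,k)})$ for the canonical band $\bb=\eta\vep_n\eta^{-1}\vep_0$ (so that $\rks(M_{(\bb,t,k)})=k\rho$) using the band version of Haupt's formula: I expect exactly one pair of windings $(G,H)$ with $G\in\Mor(F_{\bw},F_{(\bb,*)})$ ($\bw$ the string of rank $k\rho-\alp_n$) and $H\in\Mor(F_{\be_n},F_{(\bb,*)})$ to contribute, yielding a nonzero value (the inner evaluation over the hereditary algebra $\CC S(\bb,*)$ of type $\tsfA$ being an honest Euler-characteristic computation that reduces to a single quiver-Grassmannian point or a $\CC^*$-fixed-point count). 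This band evaluation is the main obstacle: the other parts are ``soft'' once the right bracket identities are set up, but confirming that a band actually appears in the support of $\Theta_{k\rho}^{(n)}$ --- with the torus-action bookkeeping of Haupt's formula applied to a genuinely cyclic winding --- is where the real work lies, and it is what ultimately forces $\pi_H$ to be injective in the imaginary degrees.
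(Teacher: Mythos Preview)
Your strategy matches the paper's proof closely: induction on height organized by the defect, with brackets $[\Theta_\bet,\Theta_{\rho_i}]$ driving the preprojective and preinjective steps, an iterated commutator of the $\chi_{[\br_j]}$ handling the regular strings (the paper records this as the clean formula~\eqref{eqn:reg}, which simultaneously settles~(b)), and a band evaluation via Haupt's formula for~(c). Two points deserve tightening. First, your justification of the base case $\chi_{[\br_j]}\in\cP(H)$ is not right: Lemma~\ref{lem:indec-mult} concerns \emph{brackets} of orbit functions and does not by itself place $\chi_{[\br_j]}$ inside $\cM(H)$; you need to exhibit it as an iterated commutator of the $\theta_i$ (exactly the mechanism you correctly invoke later for part~(b)), which is what the paper does in Lemma~\ref{lem:rho_i}. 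Second, in part~(c) the inner Grassmannian over $\CC S(\bb,*)$ is not a single point but the affine space $\CC^{k-1}$: one identifies $\Hom_{S(\bb,*)}(G_k^\lam(\II),R^{(t,k)})\cong\CC^k$ and checks that the injective maps are those avoiding the hyperplane $R^{(t,k-1)}$, so the Euler characteristic is still~$1$. Note also that among all $\bx\in[\bb_k]$ only the ``stable'' representative (with the signs on $\vep_0,\vep_n$ matching those of the chosen band $\bb$) admits a winding into $F_{(\bb,*)}$, so the sum over $[\bb_k]$ really does collapse to a single term; this is what makes your ``exactly one pair of windings'' expectation correct.
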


\begin{Rem}
(1)  For $i\in I$, and the corresponding simple root $\alp_i$, we have
  $\Theta_{\alp_i}:=\chi_{[\be_i]}=\chi_{\be_i}$.  
  Thus, in this case, there is nothing to show.
  This observation will be the starting point of our inductive proof of the theorem. In particular, we recall that the functions 
  $\Theta_i=\Theta_{\alp_i}$ for $i\in I$  are the generators of our generalized convolution algebra $\cM(H)$.  
    
(2) Note, that $\sum_{j=1}^n\Theta_{k\rho}^{(j)}=0$, if we define
$\Theta_{k\rho}^{(n)}:=\chi_{[(\br_n[kn-1])]} - \chi_{[(\br_{\tau(n)}[kn-1])]}$.
In particular, the family
$(\Theta_{k\rho}^{(j)})_{j=1,2,\ldots, n}$ is \emph{not} a basis of $\cP_{k\rho}(H)$.
\end{Rem}
  
We prove this theorem, after some preparation, in Section~\ref{ssec:proof}
Note, that this theorem implies immediately, that the map $\pi_H$ is an
isomorphism. 

\subsection{Key result for locally free string modules}
\begin{Prop} \label{prp:key}
  Let $\bw\in\Stlf(H)$ and $\br\in [\br_i]$ for some $i\in I'$,  then we have
  with $\zet:=\eta_{\tau^{-1}(i)}$ the following formula:
  \begin{multline*}
    [\chi_\bw,\chi_\br]=
     \del_{s''(\bw),+1}\del_{s'(\bw), i} \chi_{\bw\eta_{i}\br} 
    -\del_{s''(\bw),-1}\del_{s'(\bw),\tau^{-1}(i)} \chi_{\bw\zet^{-1}\br^{-1}}\\
    +\del_{t''(\bw),+1}\del_{t'(\bw), i}\chi_{\br^{-1}\eta_{i}^{-1}\bw}
    -\del_{t''(\bw),-1}\del_{t'(\bw),\tau^{-1}(i)} \chi_{\br\zet\bw}.
  \end{multline*}
\end{Prop}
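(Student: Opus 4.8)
The plan is to compute the convolution products $\chi_\bw * \chi_\br$ and $\chi_\br * \chi_\bw$ by feeding them into Haupt's formula (Theorem~\ref{thm:Haupt}), evaluated on the string modules $M_\bx$ that can appear in the support, and on band modules as well. By Lemma~\ref{lem:indec-mult} the commutator $[\chi_\bw,\chi_\br]$ is supported on indecomposable modules, so it suffices to determine the coefficient of each $\chi_\bx$ (and each band-module contribution). First I would analyze which strings $\bx$ can support a nonzero term: for $(\chi_\bw * \chi_\br)(M_\bx)$ to be nonzero we need morphisms of windings $G \in \Mor(F_\bw, F_\bx)$ and $H \in \Mor(F_\br, F_\bx)$, so $\bx$ must ``contain'' both $\bw$ and $\br$ as substrings in a compatible way, and the dimension/rank count forces $\rks(\bx) = \rks(\bw) + \rks(\br) = \rks(\bw) + \alp_i$ (recall $\rks(\br_i) = \alp_i$, and $\rks$ is constant on $[\br_i]$). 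Since $\br \in [\br_i]$ is an inverse string (a ``hook tail''), the only ways to build such an $\bx$ are the four glueings appearing in the statement: attach $\eta_i \br$ on the right of $\bw$ (possible iff $s''(\bw) = +1$, $s'(\bw) = i$), attach $\zet^{-1}\br^{-1}$ on the right (possible iff $s''(\bw) = -1$, $s'(\bw) = \tau^{-1}(i)$, using that $(\br'_{\tau^{-1}(i)})^{-1} = \br_i$, i.e. $\br^{-1} \in [\br'_{\tau^{-1}(i)}]$ and $\zet = \eta_{\tau^{-1}(i)}$ so that $\bw\zet^{-1}\br'$ is the cohook extension $\bw[-1]$-type string), and the two mirror cases on the left governed by $t''(\bw), t'(\bw)$.

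Second I would carry out the actual evaluation for each of the (at most four) candidate strings $\bx$. Fix one, say $\bx = \bw\eta_i\br$ in the case $s''(\bw)=+1=\,$first coordinate condition. Haupt's formula reduces $(\chi_\bw * \chi_\br)(M_\bx)$ to a sum over pairs $(G,H)$ of compatible sub-windings of $F_\bx$ of the hereditary-algebra product $(\chi_{G^\lam(\II)} * \chi_{H^\lam(\II)})(\II_{S(\bx)})$. The key combinatorial point is that the windings $F_\bw$ and $F_\br$ sit inside $F_\bx$ essentially uniquely (because $\bx$ was obtained by the specified concatenation and $\br$ is a hook string with the maximality property from Section~\ref{ssec:prplf}), and the quotient $M_\bx / M_\bw \cong M_\br$ in exactly one way, so the sum collapses to a single term which evaluates to $1$. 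Meanwhile $(\chi_\br * \chi_\bw)(M_\bx)$: here one must check there is \emph{no} submodule of $M_\bx$ isomorphic to $M_\br$ with quotient $M_\bw$ — intuitively because the hook $\eta_i\br$ sits at the ``source end'' of $\bx$, so a copy of $M_\br$ inside $M_\bx$ cannot be a submodule with the right quotient; hence this product vanishes on $M_\bx$. So $[\chi_\bw,\chi_\br](M_\bx) = +1$, giving the coefficient $+\chi_{\bw\eta_i\br}$. The other three cases are handled by the same method; the signs $-$ in front of $\chi_{\bw\zet^{-1}\br^{-1}}$ and $\chi_{\br\zet\bw}$ arise because in those cases it is $\chi_\br * \chi_\bw$ rather than $\chi_\bw * \chi_\br$ that is nonzero on the relevant $\bx$ (the cohook extension makes $M_\br$ a \emph{submodule} with quotient $M_\bw$, not the other way around). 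I would also record that the Kronecker deltas are genuinely exclusive: $s''(\bw)$ is either $+1$ or $-1$, so the first two terms cannot both be present, and likewise the last two; and when, say, $s''(\bw) = +1$ but $s'(\bw) \neq i$, no string extension on that side exists, so the term correctly drops.

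Third, one must rule out band-module contributions to $[\chi_\bw,\chi_\br]$. By the second line of Haupt's formula, $(\chi_\bw * \chi_\br)(M_{(\bb,t,m)})$ is a sum over pairs of winding morphisms into $F_{(\bb,*)}$, and it is nonzero only if $\rks(\bw) + \alp_i = \rkv(M_{(\bb,t,m)}) = m\,h(\bb)\rho$, an isotropic root. Even when the rank condition can be met, the existence of a winding morphism $F_\bw \to F_{(\bb,*)}$ together with one from $F_\br$ is very restrictive — and crucially, the symmetric count for $\chi_\br * \chi_\bw$ gives the same number of pairs, because on the \emph{hereditary} algebra $\CC S(\bb,*)$ (of type $\tsfA$) the relevant products $\chi_{G^\lam(\II)} * \chi_{H^\lam(\II)}$ and $\chi_{H^\lam(\II)} * \chi_{G^\lam(\II)}$ evaluated at the homogeneous regular module $R^{(t,m)}_{S(\bb,*)}$ agree. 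Hence the band contributions cancel in the commutator. (This is the same mechanism that makes the commutator supported on indecomposables in Lemma~\ref{lem:indec-mult}, applied one level down via Haupt.)

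\textbf{Main obstacle.} I expect the genuinely delicate step to be the bookkeeping in the evaluation of Haupt's formula for the mixed cases — specifically, verifying that each relevant sum over $\Mor(F_\bw,F_\bx) \times \Mor(F_\br,F_\bx)$ really does collapse to a \emph{single} pair, and that the corresponding hereditary product equals exactly $1$ (no unexpected cancellation or multiplicity), while the oppositely-ordered product vanishes. This requires carefully using the maximality defining the hooks $\br_i$ and cohooks $\br'_i$ (so that $\bw\eta_i\br$ has no ``hidden'' alternative decomposition), the combinatorics of $\St^*(H)$ and the cyclic permutation $\tau$ from Section~\ref{ssec:prplf} (to get $\zet = \eta_{\tau^{-1}(i)}$ and $(\br'_{\tau^{-1}(i)})^{-1} = \br_i$ correct), and a direct inspection of submodule versus quotient structure of string modules over the hereditary type-$\tsfA$ algebra $\CC S(\bx)$. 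The band cancellation, by contrast, I expect to be a clean parity/symmetry argument once the hereditary case is understood.
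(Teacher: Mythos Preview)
Your overall strategy matches the paper's: restrict to indecomposables via Lemma~\ref{lem:indec-mult}, then apply Haupt's formula to compute on string and band modules. The string analysis via hook/cohook concatenations is essentially what the paper does. Two points, however, differ from the paper and one of them is a genuine gap.

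\textbf{Organizational difference.} The paper does not attempt a unified evaluation of all four terms. Instead it splits into cases according to whether $\bw$ is weakly preprojective, weakly regular, or weakly preinjective (via the defect $d_\Ome$). In the preprojective case $s''(\bw)=t''(\bw)=+1$, so only the first and third terms of the formula can be present, and only $\chi_\bw*\chi_\br$ contributes nontrivially; moreover $d_\Ome(\rks(\bw)+\rks(\br))>0$ forces the rank to be non-isotropic, so no band modules occur at all. The preinjective case is dual. This split considerably shortens the bookkeeping you flag as the ``main obstacle''.

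\textbf{Gap in the band argument.} Your claimed symmetry, that $(\chi_{G^\lam(\II)}*\chi_{H^\lam(\II)})(R^{(t,m)})=(\chi_{H^\lam(\II)}*\chi_{G^\lam(\II)})(R^{(t,m)})$ for arbitrary string modules over $\CC S(\bb,*)$, is false in general: if one of $G^\lam(\II)$, $H^\lam(\II)$ is preprojective and the other preinjective, the two products on a homogeneous regular module are typically unequal (cf.\ the computation in Section~\ref{ssec:proof}(c), where precisely this asymmetry is exploited). The paper's argument is different: band modules are only relevant when $\bw$ is weakly regular (by the defect count above), and in that case both $G^\lam(\II_{S(\br)})$ and $H^\lam(\II_{S(\bw)})$ are \emph{regular} string modules for the hereditary algebra $\CC S(\bb,*)$ of type $\tsfA$. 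Regular string modules lie in the non-homogeneous tubes, hence have no homomorphisms to or from the homogeneous band module $R^{(t,m)}$, and both products vanish \emph{individually}. You would need to make this observation (or an equivalent one) to close the gap.

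\textbf{Minor correction.} Your claim that $\rks(\br_i)=\alp_i$ is wrong: as computed in Section~4.4, $\rks(\br_i)$ is a positive root of the form $\alp_{ij}$ (or similar), not a simple root in general. This does not affect the structure of your argument but the rank bookkeeping should be adjusted.
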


\begin{proof}
  Recall that we can think of $\chi_\bw*\chi_\br$ and $\chi_\br*\chi_\bw$ as
  constructible functions on $\replf(H,\rks(\bw)+\rk(\bs))$. Thus, in view of
  Lemma~\ref{lem:indec-mult}, we only have to evaluate
  $\chi_\bw*\chi_\br$ and $\chi_\br*\chi_\bw$ on indecomposable locally free
  $H$-modules $M$ with $\rkv(M)=\rks(\bw)+\rks(\br)$.
  
  Suppose first that $\bw$ is weakly preprojective.  Then $d_\Ome(\rks(\bw))>0$,
  and since $d_\Ome(\rks(\br))=0$, we have trivially 
  $d_\Ome(\rks(\bw)+\rks(\br))>0$.
  In particular, $\rks(\bw)+\rks(\br)$ is not an isotropic root.  Thus, all
  indecomposable representations (if they exist at all) with rank vector
  $\rks(\bw)+\rks(\br)$, are  string modules $M_\bx$ with $\bx\in\Stlf(H)$ and
  $\rks(\bx)=\rks(\bw)+\rks(\br)$.  By Haupt's Theorem~\ref{thm:Haupt}
  and our hypothesis we find thus,
  \begin{align*}
    \chi_\bw*\chi_\br &=
    \del_{s'(\bw),i}\chi_{\bw\eta_i\br}+\del_{t'(\bw),i}\chi_{\br^{-1}\eta_i^{-1}\bw}+
        \chi_{\cO(M_\bw\oplus M_\br)},\\
        \chi_\br*\chi_\bw &= \chi_{\cO(M_\bw\oplus M_\br)}.
  \end{align*}
  This shows our claim in this case.
 
  Next, we study the case when $\bw$ is weakly regular.  Thus, we may
  assume $s''(\bw)=+1=-t''(\bw)$. In this case, for each $(\bb,*)\in\pBa(H)$,
  and $(t,m)\in\CC^*\times\ZZp$  we have
  \[
  \chi_{\bw}*\chi_\br(M_{(\bb,t,m)})=0=\chi_{\br}*\chi_\bm(M_{(\bb,t,m)})
  \]
  by Haupt's theorem. In fact, $\CC S(\bb,*)$ is a hereditary string algebra of
  type $\tsfA_l$ for $l=l(\bw)-1$.  Moreover, by our hypothesis, 
  for each $G\in\Mor(F_\br, F_{(\bb,*)})$ and
  $H\in\Mor(F_\bw, F_{(\bb,*)})$  the $\CC S(\bb,*)$-modules
  $G^\lam(\II_{S(\br)})$ and $H^\lam(\II_{S(\bw)})$ are \emph{regular} string
  modules. Thus, there are no homomorphisms to, or from those two modules, to
  the band module $R^{(t,m)}_{S(\bb,*)}$.
So, we have to deal in this case as well only with string modules and find,
  with a similar argument as in the previous case,
  \[
    [\chi_\bw,\chi_\br]=\del_{s'(\bw),i}\chi_{\bw\eta_i\br}
    -\del_{t'(\bw),\tau^{-1}(i)} \chi_{\br\zet\bw},
    \]
    which shows our claim for $\bw$ weakly regular.
    The argument for $\bw$ weakly preinjective is dual to the weakly
    preprojective case.
\end{proof}

\subsection{Simple weakly regular strings}
We call the locally free strings $\br$ with $\br\in [\br_i^{\pm 1}]$ for some
$i\in I'$ the \emph{simple weakly regular} strings.  We observe the following:
If $i<n$ and $(i-1,i)\in\Ome$, i.e.~if $\eta_i$ points to the left, we have
$\rks(\br_i)=\alp_{i,j}$ for some $j$ with $i\leq j\leq n$ since then
\[
\br_i=\begin{cases} \eta_{i,j}=\eta_{i+1}^{-1}\cdots\eta_j^{-1} &\text{if } j<n,\\
\eta_{i,n}\vep_n^{-1}=\eta_i^{-1}\cdots\eta_n^{-1}\vep_n^{-1} &\text{if } j=n.
\end{cases}
\]
Accordingly, we find
\[
[\br_i] =\begin{cases} \{\eta_{i,j}\} &\text{if } j<n,\\
          \{\eta_{i,n}\vep_n^{-1},\eta_{i,n}\vep_n\} &\text{if } j=n.
\end{cases}
\]
Similarly, for $(n-1,n)\in\Ome$ we have $\rks(\br_n)=\alp_{kn}$ for some
$k\in\{0,1,\ldots,n-1\}$, where we agree that $\alp_{0,n}=\rho-\alp_{1,n-1}$.
Indeed, we have in this situation
\[
\br_n=\begin{cases} \vep_n^{-1}\eta^{-1}_{k,n} &\text{if } k\geq 1,\\ 
\vep_n^{-1}\eta_{0,n}^{-1}\vep_0^{-1} &\text{if } k=0,
\end{cases}
\]
and accordingly
\[
  [\br_n]=\begin{cases} 
  \{\vep_n^{-1}\eta^{-1}_{k,n},\vep_n\eta^{-1}_{k,n}\} &\text{if } k\geq 1,\\ 
  \{\vep_n^{-1}\eta^{-1}_{0,n}\vep_0^{-1}, \vep_n^{-1}\eta^{-1}_{0,n}\vep_0,
  \vep_n\eta^{-1}_{0,n}\vep_0^{-1}, \vep_n\eta^{-1}_{0,n}\vep_0\} &\text{if } k=0.
  \end{cases}
\]  
The situation for $(i,i-1)\in\Ome$ is dual.
\begin{Lem} \label{lem:rho_i}
  For each $i\in I$, we have $\chi_{[\br_i]}\in\cP_{\rks(\br_i)}(H)$.
\end{Lem}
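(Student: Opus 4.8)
As with every $\chi_{[\bw]}$, the function $\chi_{[\br_i]}$ is homogeneous of degree $\rks(\br_i)$ and — by Theorem~\ref{thm:rk-root} and the definitions — is supported precisely on the finitely many indecomposable locally free $H$-modules of that rank vector. Since $\rks(\br_i)$ is a \emph{real} root, it thus suffices, by~\cite[Lemma~4.6]{GLS3}, to show that $\chi_{[\br_i]}$ lies in $\cM(H)$; equivalently, since $\pi_H$ restricts to a surjection of graded Lie algebras $\fn(\tsfC_n)\twoheadrightarrow\cP(H)$ sending the Chevalley generator $e_m$ to $\theta_m$, and $\fn(\tsfC_n)$ is generated by the $e_m$, it suffices to realize $\chi_{[\br_i]}$ as an iterated bracket of the $\theta_m$.

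The idea is to reduce almost everything to the finite-type case, already known by~\cite[Thm.~1.1b]{GLS3}. From the explicit description of $\br_i$ obtained just before the lemma, the rank vector $\rks(\br_i)$ is supported on a connected interval $J\subseteq I$, and $J$ is a \emph{proper} subset of $I$ unless $\rks(\br_i)=\rho-\alp_{1,n-1}$. Assume first $J\subsetneq I$. Then the full subquiver of $Q(H)$ on the vertices $J$, together with the induced relations, is again a GLS-algebra $H_J=H_\CC(C_J,(D_{\min})_J,\Ome_J)$ with $C_J$ a proper (connected) subdiagram of $\tsfC_n$, hence of finite type. Extension of modules by $0$ outside $J$ gives a fully faithful exact embedding $H_J\lmd\hookrightarrow H\lmd$; since subrepresentations and quotients of a $J$-supported representation are again $J$-supported, this embedding induces a homomorphism of bialgebras $\iota_*\colon\cM(H_J)\to\cM(H)$ with $\iota_*(\theta_m^{H_J})=\theta_m^{H}$, hence $\iota_*(\cP(H_J))\subseteq\cP(H)$, and $\iota_*(\chi_{[\br_i]}^{H_J})=\chi_{[\br_i]}^{H}$ (the indecomposable locally free $H$-modules of rank $\rks(\br_i)$ being exactly the images of those for $H_J$). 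As $C_J$ is of finite type, the finite-type version of the statement being proved — which is part of~\cite[Thm.~1.1b]{GLS3} — applies to $H_J$ and gives $\chi_{[\br_i]}^{H_J}\in\cP(H_J)$; therefore $\chi_{[\br_i]}^{H}\in\cP_{\rks(\br_i)}(H)$.

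It remains to handle the single exceptional value $\rks(\br_i)=\rho-\alp_{1,n-1}=\alp_0+\alp_{1n}$, which has full support and occurs only when $\br_i$ equals $\vep_n^{-1}\eta_{0n}^{-1}\vep_0^{-1}$ up to orientation mirror (so $i\in\{1,n\}$). Choose a locally free string $\bw_0$ with $\rks(\bw_0)=\alp_{1n}$; since $\alp_{1n}$ is supported on the proper subset $\{1,\ldots,n\}$, the previous step gives $\chi_{[\bw_0]}\in\cP(H)$. One then evaluates the commutator $[\theta_0,\chi_{[\bw_0]}]$ by Haupt's Theorem~\ref{thm:Haupt}: the only surviving term is the one attaching a copy of $E_0$ at the vertex $0$, and it reproduces precisely $\chi_{[\br_i]}$ — which here is a sum of the four orbit characteristic functions corresponding to the four elements of $[\br_i]$ — while all other contributions vanish by the orientation/hook conditions and everything supported on decomposables cancels by Lemma~\ref{lem:indec-mult}. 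Hence $\chi_{[\br_i]}=\pm[\theta_0,\chi_{[\bw_0]}]\in\cP_{\rho-\alp_{1,n-1}}(H)$.

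The main work, I expect, lies in two places: the compatibility in the second paragraph (verifying that the subquiver on $J$ really carries the structure of a GLS-algebra of the stated finite type and that $\iota_*$ behaves as claimed, paying attention to small-$n$ degeneracies such as $J$ of type $\sfA_1$ or $n=2$), and the explicit Haupt computation in the exceptional case, where the relations $\vep_0^2=\vep_n^2=0$ and the nilpotent actions of $\vep_0,\vep_n$ enter and where $\chi_{[\br_i]}$ is a sum of four orbit functions, so that one must check the bracket produces all four with equal coefficient and nothing else.
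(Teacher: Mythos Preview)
Your approach is correct but takes a genuinely different and heavier route than the paper. The paper's proof is a direct, uniform two-line computation: writing $\rks(\br_i)=\alp_{ab}$ for the appropriate interval $\{a,a+1,\ldots,b\}\subset I$ (with the convention $\alp_{0n}=\rho-\alp_{1,n-1}$ in the full-support case), it simply asserts
\[
\chi_{[\br_i]}=[\cdots[[\theta_a,\theta_{a\pm 1}],\theta_{a\pm 2}],\cdots,\theta_b],
\]
noting that along the linear string $\br_i$ the relevant locally free quiver Grassmannians are points and extensions go only one way. This covers \emph{all} cases at once, including the full-support one, with no appeal to the finite-type theorem of~\cite{GLS3} and no separate Haupt calculation.

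Your reduction to finite type via the subquiver $H_J$ is valid, and the exceptional bracket $[\theta_0,\Theta_{\alp_{1n}}]$ does produce $\chi_{[\br_i]}$, but note that what you actually need from~\cite{GLS3} is not just that $\pi_{H_J}$ is an isomorphism (Thm.~1.1b), but the stronger intermediate statement that the characteristic function of \emph{all} indecomposable locally free modules of a given real-root rank lies in $\cP(H_J)$; this is established in the course of that proof, but is not the theorem statement itself. More importantly, your exceptional-case computation is essentially the same elementary bracket verification the paper does uniformly---so the finite-type detour buys nothing here, while the paper's explicit iterated-commutator formula is reused later (e.g.\ in the regular case, equation~\eqref{eqn:reg}).
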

\begin{proof}
  Without loss of generality, we may assume $(i-1,i)\in\Ome$. If $i<n$ we
  have then $\rks(\br_i)=\alp_{i,j}$ for some $j\in\{i,i+1,\ldots, n\}$, 
  as discussed above.
  Since in this situation all the relevant (locally free) quiver Grassmannians
  are reduced to points, and we have only extensions in one direction, it
  is straightforward that we get
  \[
  \chi_{[\br_i]}=  [\cdots [[\theta_j,\theta_{j-1}], \theta_{j-2}],\cdots,\theta_i]
  \in\cP_{\alp_{i,j}}(H).
  \]
  In fact, this shows that $\chi_{[\br_i]}$ is in this case an iterated
  commutator of the primitive generators of $\cP(H)\subset\cM(H)$.
  Similarly, with $\rks(\br_n)=\alp_{k,n}$ we get
  \[
  \chi_{[\br_n]}=[\cdots [[\theta_k,\theta_{k+1}],\theta_{k+2}],\cdots,\theta_n]
  \in\cP_{\alp_{k,n}}(H).
  \]
\end{proof}

\subsection{Proof of the Main Theorem} \label{ssec:proof}
(a) Let $\bet\in\Del^+_\real(\tsfC_n)$. By Theorem~\ref{thm:rk-root} we
have $\rks^{-1}(\bet)=[\bw]\cup [\bw^{-1}]$ for some $\bw\in\Stlf(H)$.

Depending on the sign of the defect, namely $d_{\Ome}(\bet)>0$, 
$d_{\Ome}(\bet)=0$ or $d_{\Ome}(\bet)<0$,
we will proceed in each case by a slightly different 
induction on $\hgt(\bet)=\sum_{i\in I}\bet_i$.

 For $d_{\Ome}(\alp)>0$, we note first, that our claim holds trivially if  $\alp=\alp_i$ for some $i\in I$ which is a sink in $Q^o$. 
If however $\hgt(\bet)>1$, we find a weakly preprojective
$\bv\in\Stlf(H)$ with $[\bv[1]]=[\bw]$ and
$\hgt(\rk(\bv))<\hgt(\alp)$ by Corollary~\ref{cor:root}.    
Thus, we may assume by induction, that our claim
holds for $\bet=\rks(\bv)$. In particular, $\CC\chi_{[\bv]}=\cP_\bet(H)\neq 0$.
We abbreviate $s'(\bv)=i$ and $t'(\bv)=j$, and have then
$\bv[1]=\bv\eta_i\br_i$.  Now, $\CC\chi_{[\br_i]}=\cP_{\rks(\br_i)}(H)$ by
Lemma~\ref{lem:rho_i}, and thus
\[
0\neq [\chi_{[\bv]},\chi_{[\br_i]}]\in\cP_\bet(H)
\]
To be more precise, we have to distinguish two cases:
If $\bet$ is a short root, we have $i\neq j$, since $\bv$ is weakly preprojective. Thus,
\[
  [\chi_{[\bv]},\chi_{[\br_i]}]=
  \sum_{\substack{\bv'\in [\bv]\\ \br\in [\br_i]}} [\chi_{\bv'},\chi_{[\br]}]= 
  \sum_{\substack{\bv'\in [\bv]\\ \br\in [\br_i]}} \chi_{\bv'\eta_i\br}=\chi_{[\bw]} 
\]
by Proposition~\ref{prp:key} and Remark~\ref{rem:fact}.
If $\bet$ is a long root, $j=i$, and so
\[
  [\chi_{[\bv]},\chi_{[\br_i]}]=
  \sum_{\substack{\bv'\in [\bv]\\ \br\in [\br_i]}} [\chi_{\bv'},\chi_{[\br]}]= 
  \sum_{\substack{\bv'\in [\bv]\\ \br\in [\br_i]}}
  (\chi_{\bv'\eta_i\br}+\chi_{\br^{-1}\eta_i^{-1}\bv'}) = 2\chi_{[\bw]},
\]
since in this case $[\bv]=\{(\bv')^{-1}\mid \bv'\in [\bv]\}$.

In view of Corollary~\ref{cor:root}, it is sufficient to show,
for the regular case $d_\Ome(\bet)=0$, that
\begin{multline} \label{eqn:reg}
  [\cdots [\chi_{[\br_i]},\chi_{[\br_{\tau^{-1}(i)}]},\chi_{\br_{\tau^{-2}(i)}}],\cdots,
    \chi_{[\br_{\tau^{-k}(i)}]}] \\=\begin{cases}
  \chi_{[\br_i[k]]}                     &\text{if } k\not\equiv -1 \mod{n}\\
  \chi_{[\br_i[k]]}- \chi_{[\br_{\tau(i)}[k]]}&\text{if } k    \equiv -1 \mod{n},
  \end{cases}
\end{multline}
since, by Lemma~\ref{lem:rho_i}, $\chi_{[\br_i]}\in\cP_{\rks(\br_i)}(H)$ for all $i\in I'$. In fact, for all $(k,i)\in\NN\times I'$, we have
\[
(t'(\br),t''(\br))=(t'(\br_i[k]),t''(\br_i[k]))=(t'(\br_i),t''(\br_i))=(i,-1)
\]
for all $\br\in [\br_i[k]]$, and an easy induction shows that
\[
(s'(\br),s''(\br))=(a'(\br_i[k]),s''(\br_i[k]))=(\tau^{-k-1}(i),+1)
\]
for all $\br\in [\br_i[k]]$. Thus, our claim follows, by induction, from
Proposition~\ref{prp:key} and Remark~\ref{rem:fact}, as above.

The (weakly) preinjective case $d_\Ome(\alp)<0$ is dual to the (weakly) preprojective case.

(b) We saw already in~\eqref{eqn:reg}, that the functions
$\Theta^{(i)}_{k\rho}=\chi_{[(\br_i[kn-1])]} - \chi_{[(\br_{\tau(i)}[kn-1])]}$
are indeed primitive elements in $\cP_{k\rho}(H)$.  We note, that
$t(\br_i[kn-1])=s(\eta_i)\neq t(\eta_i)=s(\br_i[kn-1])$ for all
$(i,k)\in I'\times\ZZp$.  Thus,
$[\br_i[kn-1]] \cap [\br_j[kn-1]]=\emptyset$ if $i\neq j$. 
Moreover, for $\br\in [\br_i[kn-1]]$ we have $\br^{-1}\not\in [\br_i[kn]]$ for all $i\in I'$.  
We conclude, that the functions $(\chi_{\br_i[kn-1]})_{i\in I'}$
are linearly independent, and as a consequence the family
$(\Theta_{k\rho}^{(i)})_{i=1,2,\ldots,n-1}$ is linearly independent.

(c) Up to duality, we may assume that $n\in Q_0^o$ is a source, and we 
fix $k\in\ZZp$. 
Consider  the ``stable'' band $\bb=\eta^{-1}\vep_0^{-1}\eta\vep_n^{-1}$, i.e.
$(\bb,*)\in\pBa(H)$.
Recall that $\bet_k:=k\rho-\alp_n$ is a long real (preprojective)  root,
and note that
\[
\bb_k:= \eta_{0,n-1}^{-1}(\vep_0^{-1}\eta\vep_n^{-1} \eta^{-1})^{k-1}
        \vep_0^{-1}\eta_{0,n-1}
\]
is a weakly preprojective string with $[\bb_k]=\rks^{-1}(\bet_k)$.
On the other hand, clearly
$\Theta_{k\rho}^{(0)}:=[\Theta_{\bet_k},\Theta_{\alp_i}]\in\cP_{k\rho}(H)$, since
$\Theta_{\bet_k}\in\cP_{\bet_k}(H)$ by Part~(a),
and $\Theta_{\alp_i}=\Theta_i\in\cP(H)$ by definition.

Thus, it will be sufficient to show that
\[  
\Theta_{k\rho}^{(0)}(M_{\bb,t,k})\neq 0 \quad\text{for all } t\in\CC^*.  
\]
In view of Haupt's theorem, this is equivalent to
show
\[
\sum_{\bx\in [\bb_k]} \sum_{\substack{G\in\Mor(F_\bx, F_{(\bb,*)})\\ H\in\Mor(F_{\vep_n}, F_{(\bb,*)})}}
    [\chi_{G^\lam(\II_{S(\bx)})}, \chi_{H^\lam(\II_{S(\vep_n)})}]_{S(\bb,*)}(R_{S(\bb,*)}^{(t,m)})
    \neq 0.
\]
Now, by the construction of $\bb$, the set $\Mor(S(\vep_n), S(\bb,*))$ contains
a single element, say $E_n$. Moreover, $E_n^\lam(\II_{S(\vep_n)})$ is a preinjective
representation of $S(\bb,*)$, since $n$ is, by hypothesis, a source of $Q^o_0$.
On the other hand, $\Mor(S(\bx), S(\bb,*))=\emptyset$ for
$\bx\in[\bb_k]\setminus\{\bb_k,\bb_k^{-1}\}$, whilst $\Mor(S(\bb_k), S(\bb,*))$
contains an unique element $G_k$, up to isomorphism. Moreover,
$G_k^\lam(\II_{S(\bb_k)})$ is a preprojective representation of $S(\bb,*)$.
In particular,
\[
\chi_{H^\lam(\II_{S(\vep_n)})}*\chi_{G^\lam(\II_{S(\bb_k)})}=
\chi_{H^\lam(\II_{S(\vep_n)})\oplus G^\lam(\II_{S(\bb_k)})}.
\]
Thus, it is sufficient to verify
\[
\chi_{G^\lam(\II_{S(\bb_k)})}*\chi_{H^\lam(\II_{S(\vep_n)})} (R_{S(\bb,*)}^{(t,k)})=1.
\]
In fact, if we describe $\Hom_{S(\bb,*)}(G^\lam(\II_{S(\bb_k)}), R_{S(\bb,*)}^{(t,k)})$
with the help of the main theorem from~\cite[p.~191]{Krause}, the above analysis
shows in particular, that there is a unique admissible triple in the sense
of Krause. Thus, we have
\begin{equation} \label{eq:hom-tA}
\Hom_{S(\bb,*)}(G^\lam(\II_{S(\bb_k)}), R_{S(\bb,*)}^{(t,k)})\cong
  \Hom_\CC(\II_{S(\bb_k)}(1), R_{S(\bb,*)}^{(t,k)}(1))\cong \Hom_\CC(\CC,\CC^k).
\end{equation}
It is an easy exercise to see, that the corresponding homomorphism of
$S(\bb,*)$-representations is injective
if and only if the image of the map in the second term of~\eqref{eq:hom-tA}
is \emph{not}
contained in the maximal regular submodule of  $R_{S(\bb,*)}^{(t,k)}$, i.e. in
$R_{S(\bb,*)}^{(t,k-1)}(1)\cong\CC^{k-1}$. Thus, the space of subrepresentations of $R_{S(\bb,*)}^{(t,k)}$ which are isomorphic to the preprojective module
$G^\lam(\II_{S(\bb_k)})$, can be identified with $\CC^{k-1}$.  The corresponding
quotient is for all those submodules isomorphic to the generalized simple
module $E_n$. It just remains to recall that the topological Euler
characteristic  $\chi(\CC^{k-1})$ of the affine space $\CC^{k-1}$ is $1$.

\subsection*{Acknowledgment}
Alberto Castillo Gómez recognizes support via a PhD grant from CONACYT in the period
from 08/2017-07/2021.  He also recognizes partial support from Daniel
Labardini's Marcos Moshinsky Chair. Christof Geiss recognizes partial support from DGAPA-UNAM grant IN116723.

%%%%%%%%%%%%%%%%%%%%%%%%%%%%%%%%%%%%%%%%%%%%%%%%%%

\end{document}